\newcommand{\subscript}[1]{{\rm S$_ #1$}}
\newtheorem{theorem}{Theorem}
\newtheorem{definition}{Definition}
\newtheorem{remark}{Remark}
\newtheorem{prop}{Proposition}
\newtheorem{lemma}{Lemma}
\newtheorem{corollary}{Corollary}
\newenvironment{proof}{\noindent{\bf Proof.}}%
{\hspace*{\fill}$\Box$\par\vspace{4mm}}
\newenvironment{claimproof}[2]{\par\noindent\underline{Proof of #1:}\space#2}{\vspace*{2mm}}
\def\cadre{$$\vcenter\bgroup\advance\hsize by -2em\noindent
	\refstepcounter{equation}(\theequation)~\ignorespaces}
\def\endcadre{\egroup\eqno$$\global\@ignoretrue}
\newcommand{\Upper}{\text{Upper}}
\newcommand{\Ext}{\text{Cross}}
\begin{document}
\pagestyle{plain}

\title{Two New Characterizations of Path Graphs}
\date{}
%  AUTORI NON LNCS
\author{Nicola Apollonio\footnote{Istituto per le Applicazioni del
		Calcolo, M. Picone, Via dei Taurini 19, 00185 Roma, Italy.
		\texttt{nicola.apollonio@cnr.it}} \and
	{Lorenzo Balzotti\footnote{Dipartimento di Scienze di Base e Applicate per l’Ingegneria, Sapienza Universit\`a di Roma, Via Antonio Scarpa, 16,
00161 Roma, Italy. \texttt{lorenzo.balzotti@sbai.uniroma1.it}.}}
}

\maketitle

\begin{abstract} 
Path graphs are intersection graphs of paths in a tree. We start from the characterization of path graphs by Monma and Wei [C.L.~Monma,~and~V.K.~Wei, Intersection {G}raphs of {P}aths in a {T}ree, J. Combin. Theory Ser. B, 41:2 (1986) 141--181] and we reduce it to some 2-colorings subproblems, obtaining the first characterization that directly leads to a polynomial recognition algorithm. Then we introduce the collection of the \emph{attachedness graphs} of a graph and we exhibit a list of minimal forbidden 2-edge colored subgraphs in each of the \emph{attachedness graph}.
\end{abstract}

\noindent \textbf{Keywords}: Path Graphs, Clique Path Tree, Minimal Forbidden Subgraphs.

\section{Introduction}\label{introduction}

A path graph is the intersection graph of paths in a tree. Other variants of the Path/Tree intersection model are obtained by requiring edge-intersection (or even arc intersection) and by specializing the shape of $T$ (e.g.: directed, rooted). The class of path graphs is clearly closed under taking induced subgraphs. 
Path graphs were introduced by Renz~\cite{renz} who also posed the question of characterizing them by forbidden subgraphs giving at the same a first partial answer.~The question has been fully answered only recently by L\'{e}v\^{e}que, Maffray and Preissmann~\cite{bfm}. 	

Path graph is a class of graphs between \emph{interval graphs} and \emph{chordal graphs}. A graph is a chordal graph if it does not contain a \emph{hole} as an induced subgraph, where a hole is a chordless cycle of length at least four. By specializing the celebrated characterization of chordal graphs due to Gavril~\cite{gavril1}, still Gavril gave the first characterization of path graphs~\cite{gavril_UV_algorithm}. A graph is an interval graph if it is the intersection graph of a family of intervals on the real line; or, equivalently, the intersection graph of a family of subpaths of a path. Interval graphs were characterized by Lekkerkerker and Boland~\cite{lekkerkerker-boland} as chordal graphs with no \emph{asteroidal triples}, where an asteroidal triple is a stable set of three vertices such that each pair is connected by a path avoiding the neighborhood of the third vertex. A generalization of the steroidal triples is introduced in~\cite{mouatadid-robere} where path graphs are characterized by forbidding \emph{sun systems}.

Inspired by the work of Tarjan~\cite{tarjan}, Monma and Wei~\cite{mew}, presented a general framework to recognize and realize intersection graphs having as intersection model all possible variants of the Path/Tree model. In particular, they characterized chordal graphs, path graph, \emph{directed path graphs} and \emph{rooted directed path graphs}, where the latters are variants of path graphs. A graph is a directed path graphs if it is the intersection graph of a family of paths of a directed tree. Directed path graphs were characterized first by Panda~\cite{panda} by a list of forbidden induced subgraphs and then by Cameron, Ho\'{a}ng and L\'{e}v\^{e}que~\cite{cameron-hoang_1,cameron-hoang_2} by extending the concept of asteroidal triples. A graph is a rooted path graphs if it is the intersection graph of a family of paths of a rooted directed tree. In the actual state of the art, there is no characterization of rooted path graphs by forbidden subgraphs or by concepts similar to asteroidal triples. The characterizations of these graphs' classes in~\cite{mew} also describe directly polynomial recognition algorithms for chordal graphs and directed path graphs but not for rooted path graphs and path graphs.

The following strict inclusions between introduced graphs' classes are proved in~\cite{mew}:
\begin{equation*}
\text{interval graphs $\subset$ rooted path graphs $\subset$ directed path graphs $\subset$ path graphs $\subset$ chordal graphs}.
\end{equation*}
The first recognition algorithm for path graphs was given by Gavril~\cite{gavril_UV_algorithm}, and it has $O(n^4)$ worst-case time complexity, where the input graph has $n$ vertices and $m$ edges. The fastest algorithms are due to Sch\"{a}ffer~\cite{schaffer} and Chaplick~\cite{chaplick} and both have $O(mn)$ worst-case time complexity. The first is a sophisticated backtracking algorithm based on characterization of Monma and Wei, while the second uses PQR-trees. Another algorithm is proposed in~\cite{dah} and claimed to run in $O(n+m)$ time, but is not considered
correct (see comments in [\cite{chaplick}, Section 2.1.4]).

Gavril also gave the first algorithm to recognize directed path graph~\cite{gavril_DV_algorithm}. Chaplick \emph{et al.}~\cite{chaplick-gutierrez} describe a linear algorithm able to say if a path graph is a directed path graph, by assuming to have the realization of path graph as the intersection of a family of paths of a tree. This implies that algorithms in~\cite{chaplick,schaffer} can be extended to recognition algorithms for directed path graphs with the same time complexity. At the state of art, these are the two fastest algorithms.

\paragraph{Our Contribution} 
Our first characterization of path graphs is the unique, at the state of the art, that directly implies a polynomial recognition algorithm, and we obtain it by starting from Monma and Wei's characterization~\cite{mew}. The second characterization follows from the first one and consists in a list of minimal forbidden subgraphs and a list of minimal forbidden induced subgraphs on a graph derived from the input graph. Now we describe in detail the two new characterizations.

In the characterization by Monma and Wei~\cite{mew} the graph is decomposed recursively by \emph{clique separators} and in every decomposition step one has to solve a coloring problem (see Theorem~\ref{thm:mw} and Section~\ref{section:mew_characterization} for all definitions and notations); the solution of the coloring problem is used to represent the graph as the intersection graph of a family of paths in a tree. The difficulty with their coloring problem led them to not prove that it can be solved in polynomial time. In our first characterization we simplify Monma and Wei's characterization by reducing it to some 2-coloring subproblems that are clearly solvable in polynomial time (see Section~\ref{section:weak_coloring}, in particular Subsection~\ref{sub:2-coloring}). Thus this characterization also describes a polynomial recognition algorithm. Moreover, it has two consequences.

%$\bullet$ We answer to the open problem by Monma and Wei~\cite{mew} asking for a polynomial algorithm which arises from their characterization (see Subsection~\ref{sub:2-coloring}).

$\bullet$ The obstructions to our 2-coloring subproblems are well-known, this allows us to give our second characterization, which consists in a list of minimal forbidden subgraphs and a list of minimal forbidden induced subgraphs of the \emph{attachedness graphs} of the input graphs (see Section~\ref{section:forbidden_subgraphs}, in particular Theorem~\ref{cor:all}).% (in Subsection~\ref{sub:LMP} we compare our list with the list of forbidden subgraphs in~\cite{bfm}).

$\bullet$ Our first characterization is used in~\cite{balzotti} to describe a recognition algorithm that specializes for path graphs and directed path graphs. The algorithm is based on Theorem~\ref{th:characterizazion_1} and the partition given in Equation~\eqref{dstorto}. Even if the worst-case time complexities are not improved, at the state of art, the recognition algorithm for directed path graphs in~\cite{balzotti} is the unique that does not use the results in~\cite{chaplick-gutierrez}, in which a linear algorithm is given that is able to establish whether a path graph is a directed path graph too. Moreover, the recognition algorithm for path graph in~\cite{balzotti} has an easier and more intuitive implementation than Sch\"{a}ffer's backtracking algorithm~\cite{schaffer} and it requires no complex data structures while algorithm in~\cite{chaplick} is built on PQR-trees. In this way, our characterization allowed us to simplify and unify the recognition algorithms for path graphs and directed path graphs. We refer to~\cite{balzotti} for further details.

%In our first characterization, we simplify their coloring problem (see Section~\ref{section:weak_coloring}) by reducing it to some 2-coloring problems (see Definition~\ref{def:weak_colorability} and Theorem~\ref{th:characterizazion_1}). From one hand, this simplification implies a new polynomial-time algorithm for recognizing/realizing path graphs; from the other hand it allows us to exhibit all forbidden configurations to the property of being a path graph in the form of forbidden 2-colored subgraphs of the \emph{attachedness graphs} of the input graphs (Section~\ref{section:forbidden_subgraphs}). This new set of forbidden subgraph leads to our second characterization. Main results are explained in Theorem~\ref{th:characterizazion_1} and in Theorem~\ref{cor:all}.

%We note that this paper is the ``theorical part'' of a wider study of path graphs and directed path graphs. The ``algorithmic part'' is described in~\cite{balzotti} where two algorithms to recognize path graphs and directed path graphs are presented. More details are given in Section~\ref{sec:consequences}.

\paragraph{Organization}
The paper is organized as follows. In Section~\ref{section:mew_characterization} we give a detailed discussion of Monma and Wei's characterization of path graphs~\cite{mew} and we define the basic concepts (e.g.,
the notion of \emph{attachedness graphs}). Section~\ref{section:weak_coloring} is devoted to our first characterization (summarized in Theorem~\ref{th:characterizazion_1}) that consists in a simplification of Monma and Wei's one. Then, in Section~\ref{section:forbidden_subgraphs}, we use the results to characterize paths graphs by a list of forbidden subgraphs in their attachedness graphs (see Theorem~\ref{cor:all}). In Section~\ref{sec:conclusions} the conclusions are given.

\section{Monma and Wei's characterization of path graphs}\label{section:mew_characterization}

In this section we show Monma and Wei's characterization of path graphs (Theorem~\ref{thm:mw}) that is based on Gavril's one (Theorem~\ref{thm:gavril2}). Let's start with a formal definition of path graphs.

A graph $G$ is a \emph{path graph} if there is a tree $T$ (\emph{the host tree of $G$}), a collection $\mathcal{P}$ of paths of $T$ and a bijection $\phi: V(G)\rightarrow \mathcal{P}$ such that two vertices $u$ and $v$ of $G$ are adjacent in $G$ if and only if the vertex-sets of paths $\phi(u)$ and $\phi(v)$ intersect. In Figure~\ref{fig:path_graph} there is a path graph $G$ on the left and the host tree of $G$ with a collection of paths $\mathcal{P}$ that realizes $G$ on the right.

\begin{figure}[h]
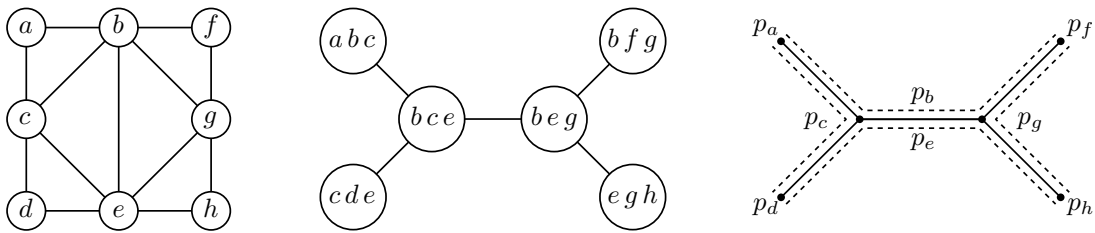

\centering
\begin{overpic}[width=14cm,percent]{images/tree_decomposition_2.eps}         

\put(1.2,18.8){$a$}
\put(10,18.8){$b$}
\put(1.2,10.2){$c$}
\put(1.2,1.4){$d$}
\put(10,1.4){$e$}
\put(18.5,18.8){$f$}
\put(18.55,10.3){$g$}
\put(18.5,1.4){$h$}

\put(30.3,17.5){$a\,b\,c$}
\put(30.3,2.7){$c\,d\,e$}
\put(38,10.1){$b\,c\,e$}
\put(49.3,10.1){$b\,e\,g$}
\put(56.5,17.5){$b\,f\,g$}
\put(56.5,2.7){$e\,g\,h$}

\put(70.2,19){$p_a$}
\put(85,12.5){$p_b$}
\put(75,10.2){$p_c$}
\put(70.2,2){$p_d$}
\put(85,8.2){$p_e$}
\put(99.7,19){$p_f$}
\put(95,10.2){$p_g$}
\put(99.7,2){$p_h$}
\end{overpic}
\caption{on the left a path graph $G$, in the center the clique path tree of $G$, on the right the host tree of $G$ and the collection $\mathcal{P}=\{p_a,\ldots,p_h\}$ that realizes $G$. Note that $p_a,p_d,p_f,p_h$ are composed by only one vertex.}
\label{fig:path_graph}
\end{figure}

Path graphs were first characterized by Gavril~\cite{gavril_UV_algorithm} through the notion of \emph{clique path tree} as follows (unless otherwise stated, maximal cliques are referred to as cliques, where a \emph{clique} is a set of pairwise adjacent vertices).

\begin{theorem}[Gavril~\cite{gavril_UV_algorithm}]\label{thm:gavril} A graph $G$ is a path graph if and only if it possesses a \emph{clique path tree}, namely, a tree $T$ whose vertices are the cliques of $G$ with the property that the set of cliques of $G$ containing a given vertex $v$ of $G$ induces a path in $T$. 
\end{theorem}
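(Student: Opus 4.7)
For the easy direction $(\Leftarrow)$, given a clique path tree $T$, I associate to each $v \in V(G)$ the path $P_v$ in $T$ induced by the maximal cliques containing $v$ (a path by hypothesis), and verify that $G$ is the intersection graph of $\{P_v\}_{v\in V(G)}$ on host tree $T$. Indeed, if $u$ and $v$ are adjacent in $G$, any maximal clique containing the edge $uv$ is a common vertex of $P_u$ and $P_v$; conversely, a common vertex of $P_u$ and $P_v$ is a maximal clique containing both, so $u$ and $v$ are adjacent.

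For $(\Rightarrow)$, suppose $G$ is realized as the intersection graph of a family $\{Q_v\}_{v\in V(G)}$ of paths in a host tree $T'$. For each $t \in V(T')$ I set $K_t := \{v \in V(G) : t \in V(Q_v)\}$; any two elements of $K_t$ have paths meeting at $t$, so $K_t$ is a clique. The key structural input is the \emph{Helly property} for subtrees of a tree: pairwise intersecting subtrees share a common vertex. Applied to $\{Q_v : v \in C\}$ for a maximal clique $C$, this yields $t \in V(T')$ with $C \subseteq K_t$, and maximality forces $K_t = C$. Hence every maximal clique of $G$ is realized as some $K_t$.

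To obtain the clique path tree $T$ from $T'$, I reduce $T'$ by repeated local operations: (i) if a leaf $t$ of the current tree satisfies $K_t \subseteq K_{t'}$ for its unique neighbor $t'$, delete $t$; (ii) if two adjacent vertices $t, t'$ satisfy $K_t = K_{t'}$, contract the edge $tt'$. Both operations preserve, for every $v \in V(G)$, the property that $\{t : v \in K_t\}$ induces a path in the current tree: contraction simply shortens such a path, and deleting a leaf whose clique is contained in that of its neighbor removes at most one endpoint while keeping the remaining set connected. When the process terminates, distinct vertices index distinct maximal cliques, every maximal clique is represented (by the Helly argument above), and the path property yields exactly the clique path tree structure.

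The main obstacle is this cleanup step: I must argue that operations (i) and (ii), applied exhaustively, always terminate with a tree whose vertex set is in bijection with the maximal cliques of $G$, and that neither step can destroy the connectivity of $\{t : v \in K_t\}$ for any $v$. Once this bookkeeping is checked, both implications are in place and the theorem follows.
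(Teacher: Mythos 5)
The paper itself gives no proof of this statement---it is quoted as a known result of Gavril---so your attempt must stand on its own, and it almost does: the $(\Leftarrow)$ direction is correct, and the Helly argument showing that every maximal clique of $G$ arises as some $K_t$ is exactly right. The genuine gap is precisely in the cleanup step you flagged: operations (i) and (ii), applied exhaustively, can terminate at a tree that is \emph{not} a clique path tree. Concretely, let the host tree be the path $t_1 t_2 t_3$, with $Q_a$ the whole path, $Q_b=\{t_1\}$ and $Q_c=\{t_3\}$; then $G$ is the path $b$--$a$--$c$, and $K_{t_1}=\{a,b\}$, $K_{t_2}=\{a\}$, $K_{t_3}=\{a,c\}$. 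Operation (i) does not apply because $t_2$ is internal, operation (ii) does not apply because no two adjacent labels are equal, so the process halts with three tree vertices, one of which carries the non-maximal clique $\{a\}$, whereas the clique path tree of $G$ has two vertices. Your rules simply have no move that removes an \emph{internal} vertex whose clique is strictly contained in a neighbor's.

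The repair is to replace (i) and (ii) by the single stronger rule: whenever $t,t'$ are adjacent and $K_t\subseteq K_{t'}$, contract the edge $tt'$, the merged vertex inheriting the label $K_{t'}$ (this subsumes your leaf deletion and equality contraction). Contraction of a tree edge maps every path of the tree to a path, and since $K_t\subseteq K_{t'}$ the set $\{s : v\in K_s\}$ of no vertex $v$ is disconnected, so after each step one still has a path representation of $G$ on a strictly smaller tree; termination is immediate. At termination no two adjacent labels are comparable, and this now forces the bijection you need: if some $K_t$ were properly contained in a maximal clique $C$, write $C=K_s$ by your Helly argument; every $v\in K_t$ then has its path containing both $t$ and $s$, hence containing the neighbor $t''$ of $t$ on the $t$--$s$ path, so $K_t\subseteq K_{t''}$, contradicting termination. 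The same argument excludes $K_t=K_s$ for distinct $t,s$, so the surviving labels are pairwise distinct maximal cliques, every maximal clique occurs, and the path property has been maintained throughout. With this one modification your outline closes into a complete proof along the standard lines of Gavril's original argument.
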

Theorem~\ref{thm:gavril2} specializes the celebrated characterization of chordal graphs, also due to Gavril~\cite{gavril1}, as those graphs possessing a \emph{clique tree} (equivalently, as the intersection graphs of a collection of subtrees in a given tree) as stated below.
\begin{theorem}[Gavril~\cite{gavril1}]\label{thm:gavril2}
A graph $G$ is a chordal graph if and only if it possesses a \emph{clique tree}, namely, a tree $T$ on the set of cliques of $G$ with the property that the set of cliques of $G$ containing a given vertex $v$ of $G$ induces a subtree in $T$.
\end{theorem}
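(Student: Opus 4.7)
The plan is to prove the two implications of the theorem separately: the forward direction by induction, constructing a clique tree from a simplicial-vertex decomposition, and the reverse direction by realising $G$ as the intersection graph of a family of subtrees of $T$ and excluding holes via a tree-walk argument.

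For the forward direction I induct on $|V(G)|$, invoking Dirac's theorem that every chordal graph admits a simplicial vertex $v$, i.e., one for which $N_G(v)$ is a clique. Since the class of chordal graphs is closed under induced subgraphs, $G' = G - v$ is chordal and by induction has a clique tree $T'$. Because $v$ is simplicial, $N_G[v]$ is a maximal clique of $G$ and is the unique maximal clique containing $v$; the remaining maximal cliques of $G$ are exactly those maximal cliques of $G'$ that are not contained in $N_G(v)$. If $N_G(v)$ is itself a maximal clique of $G'$, I obtain $T$ from $T'$ by relabelling the node $N_G(v)$ as $N_G[v]$; otherwise $N_G(v)$ is properly contained in some maximal clique $C$ of $G'$, and I attach a new leaf $N_G[v]$ to $C$. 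A short case-split on whether a vertex $u \in V(G)$ equals $v$, lies in $N_G(v)$, or is outside $N_G[v]$, verifies that in $T$ the set of cliques containing $u$ still induces a subtree.

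For the reverse direction, given a clique tree $T$ of $G$ and a vertex $v \in V(G)$, let $T_v$ denote the subtree of $T$ induced by the cliques containing $v$. Two vertices of $G$ are adjacent if and only if they lie in a common maximal clique, so $G$ is precisely the intersection graph of the family $\{T_v\}_{v \in V(G)}$ of subtrees of $T$. To show $G$ is chordal I argue by contradiction: assume the existence of a shortest chordless cycle $v_1 v_2 \cdots v_k$ with $k \geq 4$ and pick, for each $i$ modulo $k$, a node $x_i \in T_{v_i} \cap T_{v_{i+1}}$. The unique $x_{i-1}$-to-$x_i$ path $P_i$ in $T$ lies inside the subtree $T_{v_i}$, and the concatenation $W = P_1 P_2 \cdots P_k$ is a non-trivial closed walk in $T$. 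Since a tree contains no cycle, $W$ must traverse some edge twice in opposite directions; choosing the $x_i$'s so as to minimise the length of $W$ forces the two segments sharing this edge to be non-consecutive, which produces a node in $T_{v_i} \cap T_{v_j}$ for some $v_i, v_j$ non-adjacent on the cycle, hence a chord $v_i v_j$, contradicting chordlessness.

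The delicate step is this last one, where one must rule out that all repeated edges arise only between consecutive segments $P_i$ and $P_{i+1}$. The minimisation of the length of $W$ handles this, because any edge incident with $x_i$ and traversed by both $P_i$ and $P_{i+1}$ can be removed from the walk by sliding $x_i$ one step along it, strictly decreasing the length of $W$ and contradicting the extremality of the choice.
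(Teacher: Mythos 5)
The paper does not prove this statement at all: it quotes Gavril's characterization of chordal graphs as a known result (citing \cite{gavril1}) and uses it only as background for Theorem~\ref{thm:gavril}, so there is no in-paper proof to compare against. On its own merits your proposal is correct and is essentially the classical argument: the forward direction is the standard induction via a simplicial vertex (Dirac), with the right case split on whether $N_G(v)$ is itself a maximal clique of $G-v$ (relabel the node $N_G(v)$ as $N_G[v]$) or is properly contained in one (attach $N_G[v]$ as a leaf to a maximal clique $C\supseteq N_G(v)$ --- the attachment point must indeed contain $N_G(v)$, as you require, or the subtree property for $u\in N_G(v)$ fails); the reverse direction is the standard ``subtree intersection graphs are chordal'' walk argument. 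Two small points deserve tightening. First, $W$ need not be non-trivial: if all the $x_i$ coincide at a node $x$, then $x\in T_{v_1}\cap T_{v_3}$ already gives the chord $v_1v_3$, so you should dispose of this degenerate case before asserting backtracking. Second, your sliding step only removes shared edges \emph{incident with} $x_i$, so to rule out all sharing between consecutive segments you need the one-line observation that if $P_i$ and $P_{i+1}$ share any edge $e$, then by uniqueness of paths in a tree both contain the unique $x_i$-to-$e$ path, hence both use the edge of that path incident with $x_i$; with that remark the minimality argument does force the repeated edge onto two non-consecutive segments $P_i,P_j$, whence $T_{v_i}\cap T_{v_j}\neq\emptyset$ and the contradicting chord. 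With these two sentences added, your proof is complete.
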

Notice that since a clique path tree is a particular clique tree, Theorem~\ref{thm:gavril2} also implies that path graphs are chordal. In Figure~\ref{fig:path_graph}, in the center there is the clique path tree of the path graph on the left.

A clique $Q$ is a \emph{clique separator} if the removal of $Q$ from $G$ disconnects $G$ into more than one connected component (without loss of generality, throughout the paper, we suppose that $G$ is connected). If a graph $G$ has no clique separator, then $G$ is called \emph{atom}. In~\cite{mew} it is proved that an atom is a path graph if and only if it is a chordal graph.

Given a clique separator $Q$ of a graph $G$ let $G-Q$ have $s$ connected components, $s\geq 2$ with vertex-sets $V_1,\ldots,V_s$, respectively. We define $\gamma_i=G[V_i\cup Q]$, $i=1,\ldots,s$ and $\Gamma_Q=\{\gamma_1,\ldots,\gamma_s\}$. 
%Set $\Gamma_Q$ will be referred to as a \emph{$Q$-presentation}\todob{$Q$-presentation e' inutile e $Q$ clique separator e' gia' detto} of $G$. Clique $Q$ is a \emph{clique separator}. 
A clique $K$ of a subgraph $\gamma$ of $\Gamma_Q$ is called a  \emph{relevant clique}, if $K \cap Q\not=\emptyset$ and $K\neq Q$. A \emph{neighboring subgraph} of a vertex $v\in V(G)$ is a member $\gamma\in \Gamma_Q$ such that $v$ belongs to some relevant clique $K$ of $\gamma$. For instance, in Figure~\ref{fig:example1} referring to the graph on the left, all the $\gamma_i$'s but $\gamma_5$ are neighboring subgraphs of the vertex in the north-east corner of the clique separator $Q$, while all the $\gamma_i$'s but $\gamma_2$ and $\gamma_3$ are neighboring subgraphs of the vertex in the south-west corner of $Q$. We say that two subgraphs $\gamma$ and $\gamma'$ are \emph{neighbouring} if they are neighbouring subgraphs of some vertex $v\in Q$; a subset $W\subseteq \Gamma_Q$ whose elements are neighbouring subgraphs will be referred to as a \emph{neighbouring set} (e.g, \emph{neighbouring pairs}, \emph{neighbouring triples} etc). Monma and Wei~\cite{mew}, defined the following binary relations on $\Gamma_Q$.

%Given a clique separator $Q$ of a graph $G$ let $G-Q$ have $s$ connected components, $s\geq 2$ with vertex-sets $V_1,\ldots,V_s$, respectively. We define $\gamma_i=G[V_i\cup Q]$, $i=1,\ldots,s$ and $\Gamma_Q=\{\gamma_1,\ldots,\gamma_s\}$, where for a subset $A$ of $V(G)$, we denote the graph induced by $A$ in $G$ by $G[A]$. A clique of a member $\gamma$ of $\Gamma_Q$ is called a  \emph{relevant clique}, if $K \cap Q\not=\emptyset$ and $K\neq Q$. A \emph{neighboring subgraph} of a vertex $v\in V(G)$ is a member $\gamma\in \Gamma_Q$ such that $v$ belongs to some relevant clique $K$ of $\gamma$. For instance, in Figure~\ref{fig:example1} referring to the graph on the left, all the $\gamma_i$'s but $\gamma_5$ are neighboring subgraphs of the vertex in the north-east corner of the clique separator $Q$, while all the $\gamma_i$'s but $\gamma_2$ and $\gamma_3$ are neighboring subgraphs of the vertex in the south-west corner of $Q$. Monma and Wei~\cite{mew}, defined the following binary relations on $\Gamma_Q$.

\begin{figure}[h]
\centering
\begin{overpic}[width=14cm]{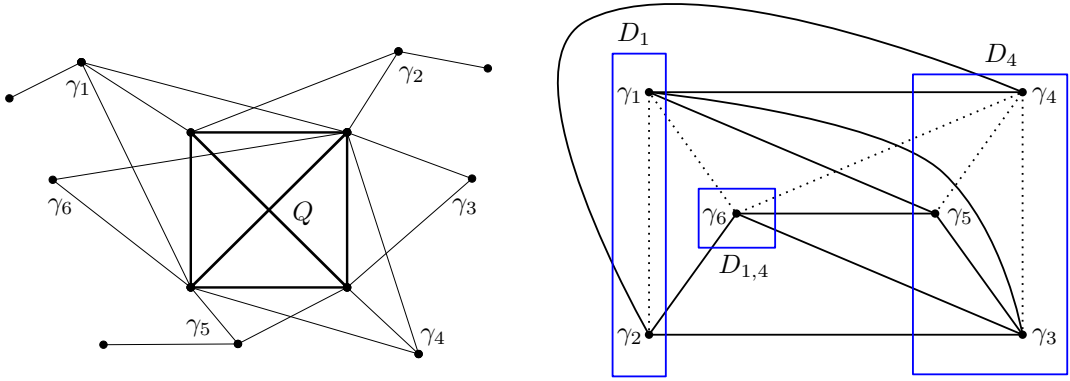}    
\put(5.7,27.2){$\gamma_1$} 
\put(37,28.5){$\gamma_2$} 
\put(42,16){$\gamma_3$} 
\put(39,3.7){$\gamma_4$} 
\put(17,4.5){$\gamma_5$} 
\put(4,16){$\gamma_6$}    
\put(27,15){$Q$}
\put(57.5,26.5){$\gamma_1$} 
\put(57.5,3.7){$\gamma_2$} 
\put(96.5,3.7){$\gamma_3$} 
\put(96.5,26.5){$\gamma_4$} 
\put(88.5,15){$\gamma_5$} 
\put(65.5,15){$\gamma_6$}    
\put(57.5,32){$D_1$} 
\put(67,10){$D_{1,4}$} 
\put(92,30){$D_4$}        
\end{overpic}
     \caption{A graph $G$ (on the right) and the $Q$-attachedness graph of $G$ (on the left). The sets displayed by a colored boundary are defined in \eqref{dgamma} and \eqref{dgammagamma'}. Remark that graph G is not a path graph.}
\label{fig:example1}
\end{figure}

\begin{description}
\item\emph{Attachedness}, denoted by $\Join$ and defined by $\gamma\Join \gamma'$ if and only if there is a relevant clique $K$ of $\gamma$ and a relevant clique $K'$ of $\gamma'$ such that $K\cap K'\cap Q\neq\emptyset$. In particular, $\gamma$ and $\gamma'$ are neighboring subgraphs of each vertex $v\in K\cap K'\cap Q$. 

\item\emph{Dominance}, denoted by $\leq$ and defined by $\gamma\leq \gamma'$ if and only if $\gamma\Join \gamma'$ and for each relevant clique $K'$ of $\gamma'$ either $K\cap Q\subseteq K'\cap Q$ for each relevant clique $K$ of $\gamma$ or $K\cap K'\cap Q=\emptyset$ for each relevant clique $K$ of $\gamma$. In  Figure~\ref{fig:example1}, graph of the right, pairs of $\leq$-comparable subgraphs of the graph of the left are joined by a dotted edge.

\item\emph{Antipodality}, denoted by $\leftrightarrow$ and defined by $\gamma \leftrightarrow\gamma'$ if and only if there are relevant cliques $K$ of $\gamma$ and $K'$ of $\gamma'$ such that $K\cap K'\cap Q\not=\emptyset$ and $K\cap Q$ and $K'\cap Q$ are inclusion-wise incomparable. In  Figure~\ref{fig:example1}, graph of the right, pairs of antipodal subgraphs of the graph of the left are joined by a solid edge.
\end{description}

Antipodality and dominance relations are disjoint binary relations on $\Gamma_Q$ whose union is the relation $\Join$. Therefore $(\gamma\leq \gamma'$, $\gamma'\leq \gamma$ or $\gamma\leftrightarrow \gamma')$ if and only if $(\gamma\Join\gamma')$. Both $\Join$ and $\leftrightarrow$ are symmetric and only $\leftrightarrow$ is irreflexive. Hence, after neglecting reflexive pairs, $(\Gamma_Q,\leftrightarrow)$,  $(\Gamma_Q,\Join)$ are simple undirected graphs on $\Gamma_Q$ referred to as, respectively, the \emph{$Q$-antipodality}, and the \emph{$Q$-attachedness graph of $G$}. The edges of the $Q$-antipodality graph of $G$ are called \emph{antipodal edges} while those edges of the $Q$-attachedness graph of $G$ which are not antipodal edges, are called \emph{dominance edges}. The \emph{$Q$-dominance} of $G$ is the graph on $\Gamma_Q$ having as edges the dominance edges (i.e., the complement of $(\Gamma_Q,\leftrightarrow)$ in $(\Gamma_Q,\Join)$). Hence the edge-sets of the $Q$-antipodality and the $Q$-dominance graphs of $G$ partition the edge-set of the $Q$-attachedness graph of $G$ and the latter is naturally 2-edge colored by the antipodality edges and by the dominance edges. We adopt the pictorial convention to represent antipodality edges by thin lines and dominance edges by dotted lines.
 
To understand the following definition, for $n\in \mathbb{N}$, we denote by $[n]$ the interval $\{1,2,\ldots,n\}$. Moreover, if $f$ is a map between sets $A$ and $B$ and $X\subseteq A$, then $f(X)$ is the image of $X$ under $f$, namely, $f(X)=\{f(x) \ |\ x\in X\}$.

\begin{definition}\label{def:strong_colorability}
Let $Q$ be a clique separator of $G$, we say that $G$ is \emph{strong $Q$-colorable} if there exists $f:\Gamma_Q\rightarrow [s]$ such that:
\begin{enumerate}[label=\thedefinition.(\arabic*), ref=\thedefinition.(\arabic*)]\itemsep0em
\item\label{com:mw_i} if $\gamma\leftrightarrow \gamma'$, then $f(\gamma)\not=f(\gamma')$;
\item\label{com:mw_ii} if $\{\gamma,\gamma'\gamma''\}$ is neighboring triple, then $|f(\{\gamma,\gamma',\gamma''\})|\leq 2$.
\end{enumerate}
\end{definition}

We refer to a coloring $f$ satisfying the conditions of Definition~\ref{def:strong_colorability} as a \emph{strong $Q$-coloring}. We use the term ``strong'' because in Section~\ref{section:weak_coloring} we introduce a weaker coloring and we prove that they are equivalent.

\begin{theorem}[Monma and Wei~\cite{mew}]\label{thm:mw}
A chordal graph $G$ is a path graph if and only if $G$ is an atom or for a clique separator $Q$ each graph $\gamma\in \Gamma_Q$ is a path graph and $G$ is strong $Q$-colorable.
\end{theorem}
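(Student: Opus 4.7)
The atom case is immediate from the characterization, noted just above this theorem, that atoms are path graphs precisely when they are chordal. Henceforth assume $G$ has a clique separator $Q$.

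For the necessity direction, I would start from a clique path tree $T$ of $G$ provided by Gavril's Theorem~\ref{thm:gavril}. First I would show that for each $\gamma\in\Gamma_Q$ the cliques of $\gamma$ span a subtree of $T$, which is a clique path tree of $\gamma$; this gives each $\gamma$ the path graph property. Next, let $B_1,\ldots,B_k$ be the connected components of $T-Q$, i.e., the branches of $T$ at $Q$, and define $f(\gamma)$ as the index of a branch that contains some relevant clique of $\gamma$. Condition~\ref{com:mw_i} holds because if $\gamma\leftrightarrow \gamma'$ via some $v\in Q$, the incomparability of the $Q$-intersections of their relevant cliques forces those cliques into distinct branches, since otherwise the $v$-path in $T$ would traverse them consecutively and force a containment of the $Q$-intersections. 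Condition~\ref{com:mw_ii} follows because the $v$-path through $Q$ in $T$ uses at most two of the neighbors of $Q$ in $T$, hence at most two branches, so every triple of subgraphs neighboring at $v$ lives in at most two branches.

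For the sufficiency direction, I would build a clique path tree $T$ of $G$ from the clique path trees $T_i$ of the $\gamma_i$'s together with the coloring $f$. Within each color class $\Gamma^c=f^{-1}(c)$, the pairs are non-antipodal, hence dominance-related, and at every common vertex $v\in Q$ the relevant cliques' $Q$-intersections form a chain under inclusion; this chain lets me glue the $T_i$ with $\gamma_i\in\Gamma^c$ along $Q$ into a single tree $T^c$ whose extensions from $Q$ live in a single branch. Then I would glue the trees $T^c$ together at $Q$ to form $T$. The check that $T$ is a clique path tree reduces, for $v\in Q$, to showing that the cliques of $G$ containing $v$ form a path in $T$; this follows from~\ref{com:mw_ii}, since the neighboring subgraphs of $v$ occupy at most two color classes, so $v$'s cliques extend from $Q$ into at most two branches of $T$ and hence sit on a path through $Q$.

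The main obstacle I anticipate is this gluing step in the sufficiency direction: within each color class one must use the dominance structure to arrange the $T_i$'s so that the individual $v$-paths compose into a single path along one branch, and globally one must use the two-color constraint per vertex to prevent branching of $v$-paths at $Q$. Carrying this out simultaneously for every $v\in Q$ is the combinatorially delicate part and is where I expect the strong $Q$-colorability hypothesis to be fully exploited.
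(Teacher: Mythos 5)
The paper does not prove Theorem~\ref{thm:mw}: it is imported verbatim from Monma and Wei~\cite{mew}, so your attempt can only be measured against the classical argument, not an in-paper proof. Your necessity half is essentially that argument and is sound, with one sloppy opening claim: the cliques of $\gamma\in\Gamma_Q$ need not span a subtree of the clique path tree $T$, since the $T$-path from $Q$ to the subtree formed by the cliques meeting $V_i$ may pass through cliques of other components assigned to the same branch. But that claim is also unnecessary --- the paper already notes that path graphs are closed under taking induced subgraphs, so each $\gamma$ is a path graph outright. The remainder is correct: the cliques meeting a fixed $V_i$ do span a subtree of $T$ avoiding the vertex $Q$, so the branch label $f(\gamma)$ is well defined; condition~\ref{com:mw_i} holds because two relevant cliques through a common $v\in Q$ lying in one branch are ordered along the $v$-path, and the clique-tree intersection property then nests their $Q$-intersections, contradicting antipodality; condition~\ref{com:mw_ii} holds because the $v$-path leaves $Q$ in at most two directions.

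The sufficiency half, however, contains a genuine gap, which you candidly flag but do not close. A small slip first: within a color class, ``non-antipodal'' does not imply ``dominance-related'' --- two members may simply fail to be attached at all; what is true is that members neighboring at a common $v\in Q$ are $\le$-comparable. The real missing idea is the gluing itself. You take arbitrary clique path trees $T_i$ of the pieces and assert that the chain structure of the $Q$-intersections at each shared vertex ``lets you glue'' them into one tree per color class. But an arbitrary clique path tree of a dominating piece $\gamma'$ need not possess an attachment vertex that is simultaneously an endpoint of the path of cliques containing $v$ for \emph{every} $v\in Q$ shared with a dominated piece $\gamma$; moreover, several pairwise incomparable dominated pieces may have to be inserted into the same tree at once, and the resulting branch must still present, at its $Q$-end, a single path of $v$-cliques for each $v$. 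One must therefore prove that the clique path trees of the pieces can be chosen, or restructured, so that all these insertions are simultaneously feasible, and that strong $Q$-colorability guarantees this; that construction is the bulk of Monma and Wei's proof and precisely the delicate combinatorial content that later forced Sch\"affer's backtracking treatment~\cite{schaffer}. As written, your proposal establishes the easy direction and offers only a plan --- the correct plan, but not a proof --- for the hard one.
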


Now we explain the conditions of Definition~\ref{def:strong_colorability} in few words. Let $T$ be a clique path tree of $G$. The removal of clique separator $Q$ from $G$ disconnects $G$ in more connected components, but it also disconnects the $T$ in more subtrees. In a way, the coloring $f$ associates a connected components to the subtrees. The first condition implies that two antipodal connected components $\gamma$ and $\gamma'$ need to be in two distinct subtrees, indeed, if not, then for some $v\in (V(\gamma)\cap Q)\setminus V(\gamma')$ or $v\in(V(\gamma')\cap Q)\setminus V(\gamma)$ the set of clique of $G$ that contains $v$ does not induce a connected path in $T$. The second condition says that all connected components that contain $v$ need to be in at most two distinct subtrees, indeed, if not, the set of clique of $G$ that contains $v$ does not induce a path in $T$.

In the following corollary, we translate Theorem~\ref{thm:mw} from a recursive fashion to a local one that is more useful to our purposes. We recall that a graph with no clique separator (i.e., an atom) is a path graph if and only if it is chordal.

\begin{corollary}\label{cor:local_mew}
A chordal graph $G$ is a path graph if and only if $G$ is strong $Q$-colorable, for all  clique separator $Q$ of $G$.
\end{corollary}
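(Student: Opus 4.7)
The plan is to derive the corollary from Theorem~\ref{thm:mw} by combining a direct construction (forward direction) with induction on $|V(G)|$ (backward direction).

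For the forward direction, assume $G$ is a path graph and fix a clique separator $Q$; I would use a clique path tree $T$ of $G$ (guaranteed by Theorem~\ref{thm:gavril}). Viewing $Q$ as a vertex of $T$, the forest $T - Q$ decomposes into components $T_1, \ldots, T_k$, and the first step is to show that all relevant cliques of any given $\gamma_i \in \Gamma_Q$ lie in a single component $T_{j(i)}$; this uses that $V_i$ is connected in $G - Q$ together with the clique-path-tree property that the cliques containing any fixed vertex form a path in $T$. Setting $f(\gamma_i) := j(i)$, the two conditions of Definition~\ref{def:strong_colorability} follow from the intuitive explanation given just after Theorem~\ref{thm:mw}: antipodal $\gamma_i$ and $\gamma_j$ must be placed on opposite sides of $Q$, because otherwise tracing the path of cliques in $T$ through a shared vertex $v \in Q$ would force $K \cap Q$ and $K' \cap Q$ to be inclusion-comparable; and a neighboring triple sharing $v \in Q$ has all its witnessing relevant cliques on the single path $P_v$ of cliques containing $v$, which crosses $Q$ at most once and so meets at most two of the $T_j$'s.

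For the backward direction I would induct on $|V(G)|$. When $G$ is an atom, the hypothesis is vacuous and the claim reduces to the recalled fact that a chordal atom is a path graph. Otherwise, pick any clique separator $Q$ of $G$: the hypothesis immediately furnishes a strong $Q$-coloring of $G$, so by Theorem~\ref{thm:mw} it remains only to show that each $\gamma_i \in \Gamma_Q$ is itself a path graph. Each $\gamma_i$ is chordal as an induced subgraph of $G$ and has strictly fewer vertices than $G$, so the inductive hypothesis applies to $\gamma_i$ provided one shows $\gamma_i$ is strong $Q'$-colorable for every clique separator $Q'$ of $\gamma_i$.

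The main obstacle, and essentially the only step that is not bookkeeping, is the lemma that every clique separator $Q'$ of $\gamma_i$ is also a clique separator of $G$. Maximality of $Q'$ in $G$ follows because $Q'$ must meet $V_i$ (otherwise $Q' \subsetneq Q$ would fail to be maximal even in $\gamma_i$), while any vertex outside $\gamma_i$ lies in some $V_j$ with $j \neq i$ and cannot be adjacent to a vertex of $Q' \cap V_i$. For the disconnection property, suppose two vertices of $\gamma_i - Q'$ that are separated in $\gamma_i - Q'$ were linked by a path in $G - Q'$; each excursion of this path outside $\gamma_i$ enters and leaves through vertices of $Q \setminus Q'$, and since $Q \setminus Q'$ is a clique every such excursion can be short-circuited by a direct edge in $Q$, producing a walk entirely inside $\gamma_i - Q'$, a contradiction. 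Given this lemma, a strong $Q'$-coloring of $G$ (provided by the global hypothesis) restricts to $\gamma_i$ via the natural inclusion of each component of $\gamma_i - Q'$ into the unique component of $G - Q'$ containing it, and the antipodality and neighboring-triple conditions lift from $\gamma_i$ to $G$ because the relevant cliques of members of $\Gamma_{Q'}$ (taken relative to $\gamma_i$) remain maximal cliques of $G$.
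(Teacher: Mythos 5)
Your proof is correct, but note that the paper offers no proof at all for Corollary~\ref{cor:local_mew}: it is presented as an immediate translation of Theorem~\ref{thm:mw} from recursive to local form, together with the recalled fact that a chordal atom is a path graph. Your argument makes explicit the two ingredients that this translation silently relies on. First, since Theorem~\ref{thm:mw} asserts strong colorability only for \emph{some} clique separator, the claim for \emph{all} separators genuinely needs an independent argument, and your clique-path-tree construction (coloring each $\gamma_i$ by the component of $T-Q$ containing its relevant cliques, then verifying~\ref{com:mw_i} and~\ref{com:mw_ii} along the paths $P_v$) supplies exactly that; second, the backward induction needs that clique separators of $\gamma_i$ are clique separators of $G$, and your short-circuiting of excursions through the clique $Q\setminus Q'$ proves both the separation property and the injectivity of the component map. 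One step you assert without justification deserves care: that relevant cliques of members of $\Gamma_{Q'}$ taken relative to $\gamma_i$ remain maximal cliques of $G$. This is true but not obvious, since an extension vertex $w$ outside $\gamma_i$ has $N(w)\subseteq V_j\cup Q$ and so would force $K\subseteq Q$; one must then argue, using that $Q\setminus Q'$ is a clique lying in a single component of $\gamma_i-Q'$, that in fact $K=Q$, contradicting maximality of $Q$ in $G$. A simpler repair is to extend each witnessing clique to a maximal clique of the corresponding member of $\Gamma_{Q'}$ in $G$ and observe that no vertex of $Q'$ can be added (by maximality of $K$ in $\gamma_i[C\cup Q']$), so traces on $Q'$, hence antipodality and neighboring witnesses, are preserved. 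Similarly, your parenthetical for why $Q'$ meets $V_i$ omits the case $Q'=Q$, which is excluded because $\gamma_i-Q=G[V_i]$ is connected. With these details added, your proof is a complete, self-contained justification of a statement the paper leaves to the reader; what the paper's terse route buys is brevity, at the cost of hiding that the quantifier ``for all $Q$'' does not follow by a purely formal unfolding of Theorem~\ref{thm:mw}.
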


By Corollary~\ref{cor:local_mew}, deciding whether a graph $G$ is a path graph is tantamount to deciding whether $G$ is strong $Q$-colorable for each separator $Q$. It is thus natural to wonder whether there are obstructions to strong $Q$-colorability and, if so, how do such obstructions look like on the attachedness graph of $G$. One such obstruction is easily recognized (see~\cite{mew}): let $\{\gamma,\gamma',\gamma''\}\subseteq \Gamma_Q$ be a neighboring triple and suppose that $\gamma$, $\gamma'$ and $\gamma''$ are pairwise antipodal (hence $\{\gamma,\gamma',\gamma''\}$ induces a triangle on the $Q$-antipodal graph of $G$). We refer to any such triple to as a \emph{full antipodal triple}. It is clear that if $\Gamma_Q$ contains a full antipodal triple, then $G$  is not strong $Q$-colorable because the two conditions in Definition~\ref{def:strong_colorability} cannot be both satisfied. 
For later reference we formalize this easy fact in a lemma.
\begin{lemma}\label{lemma:nofull}
Let $Q$ be a clique separator of $G$. If $G$ is strong $Q$-colorable, then $\Gamma_Q$ has no full antipodal triple.
\end{lemma}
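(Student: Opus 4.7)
My plan is a short proof by contradiction that directly plays conditions \ref{com:mw_i} and \ref{com:mw_ii} of Definition~\ref{def:strong_colorability} against the definition of a full antipodal triple. Assume that $G$ is strong $Q$-colorable with witness $f:\Gamma_Q\to[s]$, and suppose for contradiction that $\Gamma_Q$ contains a full antipodal triple $\{\gamma,\gamma',\gamma''\}$.

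By the very definition of a full antipodal triple, $\{\gamma,\gamma',\gamma''\}$ is a neighboring triple, so condition~\ref{com:mw_ii} gives $|f(\{\gamma,\gamma',\gamma''\})|\leq 2$. On the other hand, still by the definition of a full antipodal triple, the three subgraphs are pairwise antipodal, namely $\gamma\leftrightarrow\gamma'$, $\gamma\leftrightarrow\gamma''$ and $\gamma'\leftrightarrow\gamma''$; applying condition~\ref{com:mw_i} to each of these three pairs yields $f(\gamma)\neq f(\gamma')$, $f(\gamma)\neq f(\gamma'')$ and $f(\gamma')\neq f(\gamma'')$, so $|f(\{\gamma,\gamma',\gamma''\})|=3$. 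This contradicts the inequality obtained from condition~\ref{com:mw_ii}, and the lemma follows.

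There is essentially no obstacle here: the only point that needs to be made carefully is that a full antipodal triple is, by the terminology introduced just before the lemma, simultaneously a neighboring triple (so that \ref{com:mw_ii} applies) and a triangle in the $Q$-antipodality graph (so that \ref{com:mw_i} applies to all three edges). Once both hypotheses are recognized as being in force on the same three subgraphs, the contradiction $3=|f(\{\gamma,\gamma',\gamma''\})|\leq 2$ is immediate.
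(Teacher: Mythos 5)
Your proof is correct and follows essentially the same route as the paper, which disposes of this lemma in the discussion immediately preceding its statement: a full antipodal triple is simultaneously a neighboring triple and pairwise antipodal, so condition~\ref{com:mw_i} forces three distinct colors while condition~\ref{com:mw_ii} allows at most two. Your write-up simply makes explicit the contradiction $3=|f(\{\gamma,\gamma',\gamma''\})|\leq 2$ that the paper calls ``clear.''
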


\section{A \emph{weak} coloring}\label{section:weak_coloring}

In this section we introduce a \emph{weak coloring} that is used in Theorem~\ref{th:characterizazion_1} to give our first characterization of path graphs. This characterization simplifies Monma and Wei's one~\cite{mew} by an algorithmic point of view justifying the terms ``strong'' and ``weak''. This simplification is explained in Subsection~\ref{sub:2-coloring}. In Subsection~\ref{sub:proof_characterization_1} we prove Theorem~\ref{th:characterizazion_1}.

Dominance is a reflexive and transitive relation. Hence $(\Gamma_Q,\leq)$ is a preorder. We assume that such a preorder is in fact a partial order. The latter assumption is not restrictive as showed implicitly in Sch\"{a}ffer~\cite{schaffer} and explicitly as follows. Let $\sim$ be the equivalence relation on $\Gamma_{Q}$ defined by  $\gamma\sim\gamma'\Leftrightarrow\left(\gamma\leq\gamma'\wedge \gamma'\leq\gamma\right)$, namely, $\sim$ is the standard equivalence relation associated with a preorder. If $\gamma\sim \gamma'$ for some two $\gamma,\,\gamma'\in \Gamma_Q$, then for any $\gamma''\in \Gamma_Q$ it holds that $\gamma\leftrightarrow \gamma''$ if and only if $\gamma'\leftrightarrow \gamma''$. Analogously, if $\gamma\sim \gamma'$ for some two $\gamma,\,\gamma'\in \Gamma_Q$, then for any $\gamma''\in \Gamma_Q$ it holds that $\gamma\leq \gamma''$ if and only if $\gamma'\leq \gamma''$. Moreover, if $\gamma$ is a neighboring of $v$, for some $v\in Q$, and $\gamma\sim\gamma'$, then $\gamma'$ is a neighboring of $v$.

The following lemma shows that it is not restrictive to assume that $\Gamma_Q=\Gamma_Q/\sim$.

\begin{lemma}\label{lemma:cong}
Let $Q$ be a clique separator of $G$. If there exists $f:\Gamma_{Q}/\sim\to[m]$ satisfying~\ref{com:mw_i} and~\ref{com:mw_ii}, then $G$ is strong $Q$-colorable.
\end{lemma}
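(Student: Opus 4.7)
The plan is to lift the given coloring $f:\Gamma_{Q}/{\sim}\to[m]$ to a coloring $\tilde{f}:\Gamma_Q\to[m]$ by the natural rule $\tilde{f}(\gamma):=f([\gamma])$, where $[\gamma]$ denotes the $\sim$-class of $\gamma$. I would then verify directly that $\tilde{f}$ fulfils the two requirements of Definition~\ref{def:strong_colorability}, leveraging the three compatibility observations stated in the paragraph preceding the lemma: namely that the relations $\leftrightarrow$ and $\leq$, as well as the property ``being a neighboring subgraph of $v$'', all descend to well-defined relations/properties on $\Gamma_{Q}/{\sim}$.

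For condition~\ref{com:mw_i}, suppose $\gamma\leftrightarrow\gamma'$. First I would argue that $[\gamma]\neq[\gamma']$: if $\gamma\sim\gamma'$ then $\gamma\leq\gamma'$ and $\gamma'\leq\gamma$, but antipodality and dominance are disjoint binary relations on $\Gamma_Q$, contradicting $\gamma\leftrightarrow\gamma'$. By the first compatibility observation, $\leftrightarrow$ descends to the quotient, so $[\gamma]\leftrightarrow[\gamma']$ holds there. Applying the hypothesis that $f$ satisfies~\ref{com:mw_i} on $\Gamma_{Q}/{\sim}$ then gives $\tilde{f}(\gamma)=f([\gamma])\neq f([\gamma'])=\tilde{f}(\gamma')$, as required.

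For condition~\ref{com:mw_ii}, let $\{\gamma,\gamma',\gamma''\}\subseteq\Gamma_Q$ be a neighboring triple with common neighbor vertex $v\in Q$. By the third compatibility observation, $v$ is a neighbor of every representative of each of $[\gamma]$, $[\gamma']$, $[\gamma'']$. If the three classes are pairwise distinct they form a neighboring triple in $\Gamma_{Q}/{\sim}$, and the hypothesis on $f$ yields $|\tilde{f}(\{\gamma,\gamma',\gamma''\})|=|f(\{[\gamma],[\gamma'],[\gamma'']\})|\leq 2$. If instead two (or all three) of the classes coincide, then $\tilde{f}$ takes at most two distinct values on $\{\gamma,\gamma',\gamma''\}$, so the bound is automatic.

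The only step that requires any thought is the disjointness argument used in condition~\ref{com:mw_i}, guaranteeing that antipodal subgraphs never fall in the same $\sim$-class; once this is in hand, the remainder is mechanical bookkeeping based on the well-definedness observations recalled before the lemma statement.
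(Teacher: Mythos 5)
Your proposal is correct and follows exactly the paper's approach: the paper's proof consists of the same lift $\widetilde{f}(\gamma)=f([\gamma]_{\sim})$ and simply asserts that $\widetilde{f}$ satisfies~\ref{com:mw_i} and~\ref{com:mw_ii}, leaving the verification implicit. Your write-up merely makes explicit the routine checks (including the disjointness of $\leftrightarrow$ and $\leq$ and the three compatibility observations preceding the lemma), all of which are sound.
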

\begin{proof}
Let $\widetilde{f}:\Gamma_{Q}\rightarrow[m]$ be defined by $\widetilde{f}(\gamma)=f([\gamma]_{\sim})$. It holds that $\widetilde{f}$ satisfies~\ref{com:mw_i} and~\ref{com:mw_ii}, hence $\widetilde{f}$ is a strong $Q$-coloring.
\end{proof}

After the lemma, we assume that $(\Gamma_Q,\leq)$ is a partial order for every clique separator $Q$ of $G$. In other words, we assume $\Gamma_Q=\Gamma_Q/\sim$.

Given a clique separator $Q$ of $G$, we define $\Upper_Q=\{u\in\Gamma_Q \,|\, u\not\leq\gamma, \text{ for all } \gamma\in\Gamma_Q\}$ the set of \emph{upper bounds} of $\Gamma_Q$ with respect to $\leq$. 
%Given $\gamma,\gamma',\gamma''\in\Gamma_Q$, we say that $\{\gamma,\gamma',\gamma''\}$ is a \emph{full antipodal triangle} if $\gamma,\gamma',\gamma''$ are pairwise antipodal and $\gamma,\gamma',\gamma''$ are neighboring of $v$, for some $v\in Q$.
From now on we fix $(u_1,u_2,\ldots,u_\ell)$ an ordering of $\Upper_Q$ and for all $i,j\in[\ell]$ and $i<j$ we define

\begin{equation}\label{dgamma}	
D_{i}^Q=\{\gamma\in \Gamma_Q \ |\ \gamma\leq u_i  \text{ and } \gamma\nleq u_j,\,\,\forall j\in[\ell]\setminus \{i\}\},
\end{equation}
\begin{equation}\label{dgammagamma'}
D_{i,j}^Q=\{\gamma\in\Gamma_Q \ |\ \gamma\leq u_i,\gamma\leq u_j  \text{ and } \gamma\nleq u_k,\,\,\forall k\in[\ell]\setminus \{i,j\}\},
\end{equation}	
\begin{equation}\label{dstorto}
\mathcal{D}^Q=\Big\{D_i^Q \,|\, i\in[\ell]\Big\}\cup\Big\{D_{i,j}^Q \,|\, i,j\in[\ell],i<j\Big\}.
\end{equation}	

In few words, $D_i^Q$ consists of the elements of $\Gamma_Q$ dominated only by $u_i$ and no other upper bound, while $D_{i,j}^Q$ consists of those elements of $\Gamma_Q$ dominated only by $u_i$ and $u_j$ and no other upper bound. Referring to Figure~\ref{fig:example1}, $\Upper_Q=\{\gamma_1,\gamma_4\}$, if we fix the ordering $(u_1,u_2)=(\gamma_1,\gamma_4)$, then $D_1^Q=\{\gamma_1,\gamma_2\}$, $D_2^Q=\{\gamma_3,\gamma_4,\gamma_5\}$ and $D_{1,2}^Q=\{\gamma_6\}$. If no confusions arise, we omit the superscript $Q$. 

\begin{remark}\label{remark:D_i_is_a_partition}
If $\Upper_Q$ has no full antipodal triple, then for every $\gamma\in\Gamma_Q$ there are at most two different $u,u'\in \Upper_Q$ such that $\gamma\leq u$ and $\gamma\leq u'$. Thus  the $D_i$'s and the  $D_{i,j}$'s form a partition of $\Gamma_Q$.
\end{remark}

Before giving the definition of weak coloring we need some preliminary definitions.
%Given a clique separator $Q$ of $G$, we present the \emph{weak $Q$-coloring} (see Definition~\ref{def:weak_colorability}) and then we prove that $G$ is strong $Q$-colorable if and only if it is weak $Q$-colorable and $\Gamma_Q$ has no full antipodal triple (see Theorem~\ref{th:characterizazion_1}). At the end of this section, we explain why our coloring is easier than the coloring proposed by Monma and Wei, justifying the terms ``strong'' and ``weak''.
%The weak $Q$-coloring is based on the partition of elements of $\Gamma_Q$ in the $D_i$'s and the $D_{i,j}$'s. We build the definition of weak $Q$-coloring in to steps. First we introduce the preliminary definition of \emph{partial $Q$-coloring}. 
Let $\Ext_Q=\{\gamma\in\Gamma_Q \,|\,$ $\gamma\in D$, for some $D\in\mathcal{D}$, and $\gamma\leftrightarrow\gamma'$, for some $\gamma'\not\in D$\}. In few words, $\Ext_Q$ is composed by all elements in $D$, varying $D\in\mathcal{D}$, that are antipodal to at least one element not in $D$, i.e., $\Ext_Q$ is composed by all elements that ``cross'' the partition through antipodality.

\begin{definition}\label{def:partial_coloring}
Let $Q$ be a clique separator of $G$. Let $(u_1,u_2,\ldots,u_\ell)$ be any ordering of $\Upper_Q$. 
We say that $f:\Upper_Q\cup\Ext_Q\rightarrow [\ell]$ is a \emph{partial $Q$-coloring} if $f$ satisfies the following: 
\begin{enumerate}[label=\thedefinition.(\arabic*), ref=\thedefinition.(\arabic*)]\itemsep0em
\item\label{item:col_2} for all $i\in[\ell]$, $f(u_i)=i$, 
\item\label{item:col_5} for all $i\in[\ell]$, for all $\gamma\in D_i$, if $\exists \gamma\not\in D_i$ such that $\gamma\leftrightarrow \gamma'$, then $f(\gamma)=i$,
\item\label{item:col_6} for all $i<j\in[\ell]$, for all $\gamma\in D_{i,j}$, if $\exists\gamma'\not\in D_{i,j}$ such that $\gamma\leftrightarrow\gamma'$, then
\begin{equation}
\begin{cases}
f(\gamma)=i, &\text{if $\gamma'\in D_j$},\\
f(\gamma)=j, &\text{if $\gamma'\in D_i$}.
\end{cases}
\end{equation}
\end{enumerate}
\end{definition}

We comment Definition~\ref{def:partial_coloring}. The first condition says how we color the upper bounds of $\Gamma_Q$; actually, because of~\ref{com:mw_i}, we only need that two antipodals upper bounds have different colors, i.e., $u,u'\in \Upper_Q$ and $u\leftrightarrow u'$ imply $f(u)\neq f(u')$ for all strong $Q$-coloring $f$. We prefer to set $f(u_i)=i$ in order to use color $i,j,k$ instead of $f(u_i),f(u_j),f(u_k)$ and so on; furthermore, condition~\ref{item:col_2} implies the uniqueness of a partial $Q$-coloring. Finally, it is easy to see that conditions~\ref{item:col_5} and~\ref{item:col_6} hold for every strong $Q$-coloring satisfying~\ref{item:col_2}.

Note that there exists always a coloring satisfying conditions~\ref{item:col_2} and~\ref{item:col_5}, while condition~\ref{item:col_6} can not be satisfied if there exist $\gamma\in D_{i,j}$, $\gamma'\in D_i$ and $\gamma''\in D_j$ such that $\gamma\leftrightarrow\gamma'$ and $\gamma\leftrightarrow\gamma''$, for some $i<j\in[\ell]$. This fact leads to a kind of obstruction (see Section~\ref{section:forbidden_subgraphs}).

We are ready to give the definition of weak-coloring.

\begin{definition}\label{def:weak_colorability}
Let $Q$ be a clique separator of $G$. Let $(u_1,u_2,\ldots,u_\ell)$ be any ordering of $\Upper_Q$. 
We say that $G$ is \emph{weak $Q$-colorable} if there exists $f:\Gamma_Q\rightarrow [\ell+1]$ such that $f$ restricted to $\Upper_Q\cup\Ext_Q$ is a partial $Q$-coloring and
\begin{enumerate}[label=\thedefinition.(\arabic*), ref=\thedefinition.(\arabic*)]\itemsep0em
\item\label{item:col_3} for all $i\in[\ell]$, $f(D_i)\subseteq\{i,\ell+1\}$,
\item\label{item:col_4} for all $i<j\in[\ell]$,  $f(D_{i,j})\subseteq\{i,j\}$,
\item\label{item:col_7} for all $D\in\mathcal{D}$, if $\gamma,\gamma'\in D$ and $\gamma\leftrightarrow\gamma'$, then $f(\gamma)\neq f(\gamma')$.
\end{enumerate}
\end{definition}

We refer to a coloring $f$ satisfying the conditions of Definition~\ref{def:weak_colorability} as a \emph{weak $Q$-coloring}.

We comment Definition~\ref{def:weak_colorability}. It is easy to see that if we extend a partial $Q$-coloring, then conditions~\ref{com:mw_i} and~\ref{com:mw_ii} imply conditions~\ref{item:col_4} and~\ref{item:col_7}. The condition~\ref{item:col_3} is more restrictive than the necessary. Indeed, conditions~\ref{com:mw_i} and~\ref{com:mw_ii} should imply $f(D_i)\subseteq\{i,c_i\}$ (a possible choice of $c_i$ is $\ell+i$), but the stiff structure given by the absence of full antipodal triple should imply that all elements colored by $c_i$'s are pairwise not antipodal, and thus we can use the same color for all (as Proposition~\ref{prop:characterization_1_if}'s proof shows). 

In the following theorem we give our first characterization of path graphs. %Its proof is in next subsections.

\begin{theorem}\label{th:characterizazion_1}
A chordal graph $G$ is a path graph if and only if $\Gamma_Q$ has no full antipodal triple and $G$ is weak $Q$-colorable, for all clique separator $Q$ of $G$.
\end{theorem}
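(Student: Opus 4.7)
The plan is to establish the biconditional by showing that, under the no-full-antipodal-triple hypothesis, strong $Q$-colorability and weak $Q$-colorability are equivalent, and then invoking Corollary~\ref{cor:local_mew}. Lemma~\ref{lemma:nofull} disposes of the no-full-antipodal-triple condition for free in the forward direction, so the real work is to relate the two notions of coloring clique-separator by clique-separator.

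For the forward direction (path graph implies the two conditions), fix a clique separator $Q$. By Corollary~\ref{cor:local_mew} there is a strong $Q$-coloring $g$, and by Lemma~\ref{lemma:nofull} $\Gamma_Q$ carries no full antipodal triple; hence by Remark~\ref{remark:D_i_is_a_partition} the family $\mathcal{D}$ partitions $\Gamma_Q$. I would then construct a weak $Q$-coloring $f$ out of $g$ in three steps: first, normalize $g$ so that $g(u_i)=i$ for every upper bound (this is legitimate because a strong coloring only forces \emph{antipodal} upper bounds to receive distinct colors, and the relabeling can be propagated consistently through their dominance cones); second, on $D_i \setminus \mathrm{Cross}_Q$ collapse every color used by $g$ other than $i$ into the single fresh color $\ell+1$, noting that the absence of full antipodal triples ensures elements of $D_i \setminus \mathrm{Cross}_Q$ that $g$ separates into distinct colors are pairwise non-antipodal, so~\ref{item:col_7} is preserved; third, on $D_{i,j}$ choose $f(\gamma)\in\{i,j\}$ compatibly with $g$'s antipodality requirements. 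By construction $f$ agrees with the partial $Q$-coloring on $\Upper_Q\cup\mathrm{Cross}_Q$ and satisfies~\ref{item:col_3},~\ref{item:col_4},~\ref{item:col_7}.

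For the backward direction (the two conditions imply path graph), given a weak $Q$-coloring $f$, I would prove that $f$ itself is a strong $Q$-coloring and apply Corollary~\ref{cor:local_mew}. Condition~\ref{com:mw_i} is checked by case analysis on the classes of $\mathcal{D}$ containing the antipodal pair: the same-class case is exactly~\ref{item:col_7}; when $\gamma$ and $\gamma'$ lie in different classes they both sit in $\mathrm{Cross}_Q$, and the partial-coloring clauses~\ref{item:col_5},~\ref{item:col_6} immediately force $f(\gamma)\neq f(\gamma')$ except possibly in the shared-index case $\gamma\in D_{i,j}$, $\gamma'\in D_{i,k}$. Condition~\ref{com:mw_ii} is handled by observing that a neighboring triple $\{\gamma,\gamma',\gamma''\}$ of some $v\in Q$ without a full antipodal triple contains at least one dominance edge; combining this with the range constraints~\ref{item:col_3}--\ref{item:col_4} and with the way $\mathrm{Cross}_Q$-vertices are coloured pins the triple's image down to at most two values.

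The main obstacle is the shared-index subcase of condition~\ref{com:mw_i} in the backward direction: for antipodal $\gamma\in D_{i,j}$ and $\gamma'\in D_{i,k}$ the weak coloring a priori allows $f(\gamma)=f(\gamma')=i$. The crucial step is to exploit that $\{\gamma,\gamma',u_i\}$ is a neighboring triple (via any $v\in K\cap K'\cap Q$ witnessing the antipodality, since $\gamma\leq u_i$ and $\gamma'\leq u_i$ force $v$ into relevant cliques of $u_i$) and that the no-full-antipodal-triple hypothesis, together with the partial-coloring constraints induced by the other antipodal neighbors of $\gamma$ and $\gamma'$, rules out the assignment $(i,i)$. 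Formalizing this argument - essentially showing that any weak $Q$-coloring can be chosen on the free portions of the $D_{i,j}$'s so that every cross-class antipodal pair is properly coloured - is the technical heart of the proof, and explains the comment preceding the statement that ``all elements colored by $c_i$'s are pairwise not antipodal, and thus we can use the same color for all''.
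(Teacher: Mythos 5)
Your overall architecture is the paper's own: prove that, separator by separator, strong $Q$-colorability is equivalent to (no full antipodal triple $+$ weak $Q$-colorability), then invoke Corollary~\ref{cor:local_mew}; and your forward direction tracks Lemma~\ref{lemma:strong_is_partial} and Proposition~\ref{prop:characterization_1_if} quite faithfully (normalize so that $f(u_i)=i$, recolor the set $\Omega_i$ of elements of $D_i$ not colored $i$, merge into one fresh color). Two small remarks there: to use the \emph{single} color $\ell+1$ across all $i$ you must also show $\gamma\in\Omega_i$, $\gamma'\in\Omega_j$, $i\neq j$, are non-antipodal --- this is where~\ref{item:col_5} enters, and your ``pairwise non-antipodal within $D_i$'' justification does not cover it; and no ``choice'' on the $D_{i,j}$'s is needed, since~\ref{item:col_4} already holds for any normalized strong coloring. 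The genuine error sits in the backward direction, at exactly the point you call the technical heart. The shared-index subcase --- antipodal $\gamma\in D_{i,j}$, $\gamma'\in D_{i,k}$ with $k\notin\{i,j\}$ --- is \emph{vacuous} under the no-full-antipodal-triple hypothesis: this is~\ref{item:antipodal_k_k'_1}. Concretely, a vertex $v$ witnessing $\gamma\leftrightarrow\gamma'$ makes $u_i$, $u_j$, $u_k$ pairwise neighboring subgraphs of $v$ via $\gamma\leq u_j$ and $\gamma'\leq u_k$ (statements~\ref{item:neighboring_leq} and~\ref{item:neighboring_ant}); distinct upper bounds are $\leq$-incomparable, so these three pairwise attachednesses are all antipodal and $\{u_i,u_j,u_k\}$ is a full antipodal triple, a contradiction. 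Your proposed resolution cannot be repaired as stated: the triple $\{\gamma,\gamma',u_i\}$ you invoke carries two dominance edges ($\gamma\leq u_i$ and $\gamma'\leq u_i$), hence is never a full antipodal triple and yields no contradiction; the obstruction lives among the upper bounds, not in that triple. Relatedly, your plan to ``choose the weak coloring on the free portions of the $D_{i,j}$'s'' misreads the situation: Proposition~\ref{prop:characterization_1_only_if} shows that \emph{every} weak $Q$-coloring is a strong $Q$-coloring, and indeed the partial coloring on $\Upper_Q\cup\Ext_Q$ is unique, so there is nothing to choose.

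The second, quieter gap is condition~\ref{com:mw_ii}. You dispose of it in one sentence (``a neighboring triple contains a dominance edge, and the range constraints pin the image down to two values''), but this verification is the bulk of the paper's proof of Proposition~\ref{prop:characterization_1_only_if}: one must run through the four non-fully-antipodal configurations $(ii)$--$(v)$ of~\eqref{diagram:triples}, and cases $(iii)$ and $(iv)$ are not routine. They require Claim~\eqref{claim:strong_weak} (antipodality to an upper bound forces colors through~\ref{item:col_5}) together with~\ref{item:geq_i,j} and~\ref{item:antipodal_k_k'_1} to pin down which classes of $\mathcal{D}$ the three elements can occupy; for instance, in case $(iv)$ with $\gamma''\in D_i$ and $\gamma\in D_{i,j}$ one must first derive $u_j\leftrightarrow\gamma''$, hence $f(\gamma'')=i$, before the palette collapses to $\{i,j\}$. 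As written, your backward direction asserts rather than proves this, so the proposal does not yet constitute a proof of the theorem even after the shared-index confusion is removed.
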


\subsection{Weak coloring equals to 2-coloring subproblems}\label{sub:2-coloring}

Now we explain why our coloring problem shown in Theorem~\ref{th:characterizazion_1} simplifies Monma and Wei's one  stated in Theorem~\ref{thm:mw} by an algorithmic point of view. Note that the two conditions of strong $Q$-coloring are in conflict with each other. Indeed, if too few colors are used, then the first condition is violated, otherwise, if one uses too many colors, then the second condition is violated. For this reason the algorithm in~\cite{mew} does not run in polynomial time. In particular, their characterization does not describe directly a polynomial algorithm. Despite this, Sch\"{a}ffer~\cite{schaffer} succeed to implement a sophisticated backtracking polynomial algorithm that starts from their characterization.

Our characterization in Theorem~\ref{th:characterizazion_1} requires the absence of full antipodal triple and the check of six conditions:~\ref{item:col_2}, \ref{item:col_5}, \ref{item:col_6} of Definition~\ref{def:partial_coloring} and~\ref{item:col_3}, \ref{item:col_4}, \ref{item:col_7} of Definition~\ref{def:weak_colorability}. First we note that conditions~\ref{item:col_2}, \ref{item:col_5},  and~\ref{item:col_3},~\ref{item:col_4} are \emph{always} satisfiable, i.e., there exists always a coloring $f:\Gamma_Q\rightarrow [\ell+1]$ that satisfies them. Moreover, checking for the absence of full antipodal triple and condition~\ref{item:col_6} are polynomial problems, because they are antipodal paths/cycles of length 3. Finally, condition~\ref{item:col_7} consists in 2-coloring problems restricted on elements in $D$, for $D\in\mathcal{D}$. In other words, after polynomial checks, we succeed to reduce the coloring problem of Monma and Wei to some 2-coloring subproblems. This allows us to exhibit a list of forbidden subgraphs in the attachedness graph (see Section~\ref{section:forbidden_subgraphs}).

%In the following subsections we prove Theorem~\ref{th:characterizazion_1}. 

\subsection{Proof of Theorem~\ref{th:characterizazion_1}}\label{sub:proof_characterization_1}

This subsection is split into three parts: in Subsection~\ref{sub:preliminary_results} we give some useful results about dominance and antipodality, in Subsection~\ref{sub:strong_implies_weak} and Subsection~\ref{sub:weak_implies_strong} we prove the ``if part'' and the ``only if part'' of Theorem~\ref{th:characterizazion_1}, respectively. In particular, Theorem~\ref{th:characterizazion_1} is implied by Proposition~\ref{prop:characterization_1_if} and by Proposition~\ref{prop:characterization_1_only_if}.

\subsubsection{Preliminary results}\label{sub:preliminary_results}
It is convenient to a have a handy pictorial representation to deal with the relations $\leq$ and $\leftrightarrow$. Two elements $\gamma',\,\gamma''\in \Gamma_Q$ such that $\gamma'\leq \gamma''$ are drawn placing $\gamma'$ below $\gamma''$---here ``below'' means that viewing the sheet as a portion of the Cartesian plane with origin placed in left bottom corner, the ordinate of $\gamma'$ is smaller than the ordinate of $\gamma''$---and joining them by a dotted line while, if $\gamma',\,\gamma''\in \Gamma_Q$ such that $\gamma'\leftrightarrow \gamma''$, then their are joined by a thin line wherever they are placed. For, instance, the following diagrams, represent all possible cases involving three pairwise attached elements of $\Gamma_Q$ (there is not a case that is impossible because of the transitivity of $\leq$).

\begin{equation}\label{diagram:triples}
\begin{gathered}
\xymatrix@R-1pc@C=3mm{
	&  *[o]+<5pt>{\gamma''} &\\
	& & *[o]+<5pt>{\gamma'}\ar@{-}[ul]  \\
	*[o]+<5pt>{\gamma}\ar@{-}[uur]\ar@{-}[urr] & &\\ & (i) &}\qquad
\xymatrix@R-1pc@C=3mm{
	&  *[o]+<5pt>{\gamma''} &\\
	& & *[o]+<5pt>{\gamma'}\ar@{.}[ul]  \\
	*[o]+<5pt>{\gamma}\ar@{.}[uur]\ar@{.}[urr] & &\\ & (ii) &}\qquad	
\xymatrix@R-1pc@C=3mm{
	&  *[o]+<5pt>{\gamma''} &\\
	& & *[o]+<5pt>{\gamma'}\ar@{-}[ul]  \\
	*[o]+<5pt>{\gamma}\ar@{.}[uur]\ar@{-}[urr] & &\\ & (iii) &}\qquad
\xymatrix@R-1pc@C=3mm{
	&  *[o]+<5pt>{\gamma''} &\\
	& & *[o]+<5pt>{\gamma'}\ar@{.}[ul]  \\
	*[o]+<5pt>{\gamma}\ar@{.}[uur]\ar@{-}[urr] & &\\ & (iv) &}\qquad
\xymatrix@R-1pc@C=3mm{
	&  *[o]+<5pt>{\gamma''} &\\
	& & *[o]+<5pt>{\gamma'}\ar@{-}[ul]  \\
	*[o]+<5pt>{\gamma}\ar@{.}[uur]\ar@{.}[urr] & &\\ & (v) &}
\end{gathered}
\end{equation}

\begin{lemma}\label{lemma:elenco_trivial_1}
Let $Q$ be a clique separator of $G$, the following hold:
\begin{enumerate}[label=\thelemma.(\alph*), ref=\thelemma.(\alph*)]\itemsep0em
\item\label{item:neighboring_leq} $\gamma\leq\gamma'\Rightarrow \gamma$ and $\gamma$ are neighboring of $v$, for all $v\in V(\gamma)\cap Q$
\item\label{item:neighboring_ant} $\gamma\leftrightarrow\gamma'\Rightarrow \gamma$ and $\gamma$ are neighboring of $v$, for all $v\in V(\gamma)\cap V(\gamma')\cap Q$,
\item\label{item:neighboring_diagram} let $\gamma,\gamma',\gamma''\in\Gamma_Q$, if one among $(ii)$,$(iii)$,$(iv)$,$(v)$ of \eqref{diagram:triples} applies, then $\gamma,\gamma',\gamma''$ are neighboring of $v$ for all $v\in V(\gamma)\cap V(\gamma')\cap Q$,
\end{enumerate}
\end{lemma}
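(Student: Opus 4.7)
First I would tackle (a). Starting from $\gamma \leq \gamma'$, which by definition includes $\gamma \Join \gamma'$, pick relevant cliques $K$ of $\gamma$ and $K'$ of $\gamma'$ with $K \cap K' \cap Q \neq \emptyset$. The key move is to unfold the ``either/or'' quantifier structure in the dominance definition against the fixed $K'$: the second branch (``$L \cap K' \cap Q = \emptyset$ for every relevant clique $L$ of $\gamma$'') is immediately ruled out by the choice of $K$, so the first branch must hold, giving $L \cap Q \subseteq K' \cap Q$ for every relevant clique $L$ of $\gamma$. In particular, any vertex $v$ attaching $\gamma$ to $Q$ through some relevant clique lies in $K'$, so $\gamma'$ is a neighboring subgraph of $v$; clearly so is $\gamma$.

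Item (b) is a direct unfolding of definitions. For $v \in V(\gamma) \cap V(\gamma') \cap Q$, read consistently with (a) as a vertex of $Q$ belonging to some relevant clique of both $\gamma$ and $\gamma'$, the very definition of ``neighboring subgraph of $v$'' is satisfied by each of $\gamma$ and $\gamma'$. The antipodality hypothesis is used only to certify the existence of such a $v$: by the definition of $\gamma \leftrightarrow \gamma'$ there exist relevant cliques $K$ of $\gamma$ and $K'$ of $\gamma'$ with $K \cap K' \cap Q \neq \emptyset$, and any $v$ in this set works.

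For (c) the plan is a brief case analysis across diagrams $(ii)$--$(v)$, exploiting the fact that each of them contains the dominance edge $\gamma \leq \gamma''$. I would first produce a common attachment vertex $v$ for the pair $\gamma, \gamma'$: in $(iii)$ and $(iv)$, where $\gamma \leftrightarrow \gamma'$, via (b) applied to the witnesses of antipodality; in $(ii)$ and $(v)$, where $\gamma \leq \gamma'$, via (a). The resulting $v$ lies in some relevant clique of $\gamma$, so applying (a) to the pair $\gamma \leq \gamma''$ with this $v$ yields a relevant clique of $\gamma''$ containing $v$, making $\gamma''$ a neighboring subgraph of $v$ as well. The main obstacle I anticipate is precisely the quantifier unfolding used in (a): promoting ``some relevant clique of $\gamma$ meets $K'$ on $Q$'' to ``every relevant clique of $\gamma$ has $Q$-part inside $K' \cap Q$'' is the load-bearing observation, and the remainder of the argument amounts to bookkeeping across the four diagrams of \eqref{diagram:triples}.
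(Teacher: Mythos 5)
Your proof is correct and takes essentially the same route as the paper's: items (a) and (b) follow by unfolding the definitions of $\leq$ and $\leftrightarrow$ (your quantifier analysis in (a) --- ruling out the empty-intersection branch for the witness clique $K'$ and thereby promoting the containment to all relevant cliques of $\gamma$ --- is exactly the step the paper leaves implicit), and (c) is handled by the same case split over \eqref{diagram:triples}, applying (a) twice in cases $(ii)$ and $(v)$, and (b) followed by (a) on the common edge $\gamma\leq\gamma''$ in cases $(iii)$ and $(iv)$.
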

\begin{proof}
The first two statements follow from definitions of $\leq$ and $\leftrightarrow$. The last statement holds for cases $(ii)$ and $(v)$ by applying~\ref{item:neighboring_leq} on $\gamma\leq\gamma'$ and $\gamma\leq\gamma''$. For cases $(iii)$ and $(iv)$,~\ref{item:neighboring_ant} implies that $\gamma$ and $\gamma'$ are neighboring of $v$ for all $v\in V(\gamma)\cap V(\gamma')\cap Q$, finally,~\ref{item:neighboring_leq} applied to $\gamma$ and $\gamma''$ implies the thesis.
\end{proof}

The following lemma shows that the absence of full antipodal triple gives a stiff structure of antipodality graph with respect to partition $\mathcal{D}$. Indeed, for some $D,D'\in\mathcal{D}$, it holds that $\gamma$ can not be antipodal to $\gamma'$, for $\gamma\in D$ and $\gamma'\in D'$. 

\begin{lemma}\label{lemma:elenco_trivial_2}
Let $Q$ be a clique separator of $G$ and let $i<j\in[\ell]$. If $\Upper_Q$ has no full antipodal triple, then the following hold:
\begin{enumerate}[label=\thelemma(\alph*), ref=\thelemma.(\alph*)]\itemsep0em
\item\label{item:geq_i,j} $\gamma\leq\gamma'$ and $\gamma\in D_{i,j}\Rightarrow\gamma'\in D_{i,j}\cup D_i\cup D_j$,
%\item $\gamma\leftrightarrow\gamma'$ and $\gamma\in D_{i,j}\Rightarrow\gamma'\in D_i\cup D_j\cup D_{i,j}$,
\item\label{item:antipodal_k_k'_1} $\gamma\leftrightarrow\gamma'$ and $\gamma\in D_{i,j}$ $\Rightarrow$ $\gamma'\in D_{i,j}\cup D_i\cup D_j$,
%\item\label{item:antipodal_k_k'_2} $\gamma\leftrightarrow\gamma'$, $\gamma\in D_{i,j}$ and $\gamma'\in D_{k,k'}\Rightarrow$ $k=i$ and $k'=j$,\todob{più chiaro se le metto insieme?}
\item\label{item:antipodal_D_i_Upper} $\gamma\leftrightarrow\gamma'$, $\gamma\in D_i$ and $\gamma'\not\in D_i$ $\Rightarrow\gamma\leftrightarrow u_k$ for $\gamma'\leq u_k$ and $k\neq i$.
\end{enumerate}
\end{lemma}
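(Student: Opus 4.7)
The plan is to dispatch (a) by a short order-theoretic argument and to handle (b) and (c) via a common vertex-chasing technique that lifts information about the position of $\gamma'$ in the partition $\mathcal{D}$ onto the upper bounds of $\Gamma_Q$ through a single witness vertex in $Q$. For (a), only the transitivity of $\leq$ is needed: if $\gamma' \leq u_m$, then $\gamma \leq \gamma' \leq u_m$ yields $\gamma \leq u_m$, and since $\gamma \in D_{i,j}$ is dominated by no upper bound other than $u_i$ and $u_j$, this forces $m \in \{i,j\}$. Hence the set of upper bounds dominating $\gamma'$ is contained in $\{u_i,u_j\}$, placing $\gamma'$ in $D_i \cup D_j \cup D_{i,j}$.

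For (c), I would start from the witness vertex $v \in K \cap K' \cap Q$ afforded by $\gamma \leftrightarrow \gamma'$, where $K$ and $K'$ are relevant cliques of $\gamma$ and $\gamma'$ respectively. Lemma~\ref{lemma:elenco_trivial_1}.(a) applied to $\gamma' \leq u_k$ with $v \in V(\gamma') \cap Q$ makes $u_k$ a neighboring subgraph of $v$, so some relevant clique $U$ of $u_k$ contains $v$; in particular $v \in K \cap U \cap Q$ witnesses $\gamma \Join u_k$. It then remains to rule out both dominance directions. If $\gamma \leq u_k$, then $\gamma$ would be dominated by the two distinct upper bounds $u_i$ and $u_k$ (using $k \neq i$), contradicting $\gamma \in D_i$; and if $u_k \leq \gamma$, then the maximality of $u_k$ in the partial order $(\Gamma_Q,\leq)$ would force $\gamma = u_k$, impossible since $u_k \in D_k$ while $\gamma \in D_i$ with $i \neq k$. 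Hence $\gamma \leftrightarrow u_k$.

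Part (b) admits the same style of argument by contradiction. Assuming $\gamma' \leq u_k$ for some $k \notin \{i,j\}$, I again fix the witness vertex $v \in K \cap K' \cap Q$ from $\gamma \leftrightarrow \gamma'$ and apply Lemma~\ref{lemma:elenco_trivial_1}.(a) to the three dominances $\gamma \leq u_i$, $\gamma \leq u_j$, and $\gamma' \leq u_k$. This shows that each of $u_i$, $u_j$ and $u_k$ is a neighboring subgraph of $v$, so the three of them are pairwise attached through $v$; since all three are maximal in $\Gamma_Q$, no pair is comparable, and all three relations must be antipodality. This produces a full antipodal triple in $\Upper_Q$, contradicting the hypothesis. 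The main bookkeeping subtlety is in (c), where one must combine the existential form of $\gamma \leftrightarrow \gamma'$ (which only promises \emph{some} pair of witness cliques) with the universal ``either/or'' form of the definition of $\leq$, so that the single vertex $v$ simultaneously certifies $\gamma \Join u_k$ and rules out both $\gamma \leq u_k$ and $u_k \leq \gamma$.
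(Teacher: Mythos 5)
Your proposal is correct and takes essentially the same route as the paper's proof: a witness vertex $v\in K\cap K'\cap Q$ is propagated through Lemma~\ref{lemma:elenco_trivial_1} to make the relevant upper bounds neighboring of $v$, so that a violation of~\ref{item:antipodal_k_k'_1} produces a full antipodal triple $\{u_i,u_j,u_k\}$ in $\Upper_Q$, while for~\ref{item:antipodal_D_i_Upper} the attachedness $\gamma\Join u_k$ is combined with the exclusion of both dominance directions. If anything your write-up is slightly tighter than the paper's: you dispatch~\ref{item:geq_i,j} directly from transitivity of $\leq$ (the paper folds it into the triple argument), and in~\ref{item:antipodal_D_i_Upper} you make explicit the final step ruling out $\gamma\leq u_k$ and $u_k\leq\gamma$, which the paper leaves implicit (its closing line even reads ``$u_i\leftrightarrow u_k$'', apparently a slip for the stated conclusion $\gamma\leftrightarrow u_k$).
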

\begin{proof}
Statements~\ref{item:geq_i,j} and~\ref{item:antipodal_k_k'_1} follow from the absence of full antipodal triple, transitivity of $\leq$,~\ref{item:neighboring_leq} and~\ref{item:neighboring_ant}. Indeed, if one among~\ref{item:geq_i,j} and~\ref{item:antipodal_k_k'_1} is denied, then, by using~\ref{item:neighboring_leq} and~\ref{item:neighboring_ant}, we find a full antipodal triple $\{u_i,u_j,u_k\}\in\Upper_Q$, absurdum by hypothesis. 

To prove~\ref{item:antipodal_D_i_Upper}, we observe that $\gamma'\not\in D_i$ implies that there exists $u_k\in\Upper_Q$ such that $\gamma'\leq u_k$ and $k\neq i$. Such $k$ is unique, indeed, if there exists $k'\neq k$ such that $\gamma'\leq u_{k'}$, then $\{u_k,u_{k'},u_i\}$ is a neighboring set by~\ref{item:neighboring_leq} and~\ref{item:neighboring_ant}, and thus it is full antipodal triple, absurdum by hypothesis. Finally, $\gamma,\gamma',u_i,u_k$ is a neighboring set because of~\ref{item:neighboring_leq} and~\ref{item:neighboring_ant}, and thus $u_i\leftrightarrow u_k$ as claimed.
\end{proof}

Lemma~\ref{lemma:elenco_trivial_2} implies that every partial $Q$-coloring sets the color of $\gamma\in\Gamma_Q$ if and only if $\gamma\in\Ext_Q$. By its definition, the color is univocally determined, i.e., if there exists a partial $Q$-coloring, then it is unique.

\subsubsection{Strong coloring implies weak coloring}\label{sub:strong_implies_weak}

The following lemma shows that a strong $Q$-coloring $f$ can be modified in order to satisfy condition~\ref{item:col_2}, and, if so, then $f$ satisfies conditions~\ref{item:col_5},~\ref{item:col_6},~\ref{item:col_4},~\ref{item:col_7}. This is the first step to prove that definition of strong $Q$-coloring implies the definition of weak $Q$-coloring.

\begin{lemma}\label{lemma:strong_is_partial}
Let $Q$ be a clique separator and let $f:\Gamma_Q\rightarrow[r]$ be a strong $Q$-coloring. Then there exists $g:\Gamma_Q\rightarrow[r']$, with $r'\geq r$, satisfying~\ref{item:col_2}. Moreover, conditions~\ref{item:col_5},~\ref{item:col_6},~\ref{item:col_4},~\ref{item:col_7} hold for every strong $Q$-coloring satisfying~\ref{item:col_2}.
\end{lemma}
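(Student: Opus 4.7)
The lemma makes two separate claims---existence of $g$ satisfying~\ref{item:col_2}, and that any strong $Q$-coloring satisfying~\ref{item:col_2} also satisfies~\ref{item:col_5},~\ref{item:col_6},~\ref{item:col_4},~\ref{item:col_7}---which I would address in turn, with the existence part being the main obstacle.

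Before either part, I would extract a structural consequence of the strong-coloring hypothesis: $\Gamma_Q$ contains no full antipodal triple, since three pairwise antipodal (hence pairwise neighboring) elements would demand three $f$-colors by~\ref{com:mw_i} yet at most two by~\ref{com:mw_ii}. Specialized to the pairwise $\leq$-incomparable set $\Upper_Q$, this yields that two attached upper bounds are necessarily antipodal and hence carry distinct $f$-colors, and that no neighboring triple lies entirely inside $\Upper_Q$.

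For existence, I would construct $g$ by relabeling $f$. Within each component $C$ of the $Q$-attachedness subgraph restricted to $\Upper_Q$, the $f$-colors are already pairwise distinct, so a permutation realigns them as $u_i\mapsto i$ on $C$; two distinct components share no neighboring triple of upper bounds, so these permutations can be chosen independently. The subtle point is handling neighboring triples of the form $\{u_i,\gamma,\gamma'\}$ with $\gamma$ or $\gamma'$ not an upper bound, where naively relabeling $u_i$ alone can push $|g|$ from $2$ to $3$. I would resolve this by dragging each non-upper-bound element along with its anchoring upper bound: set $g(u_i)=i$; set $g(\gamma)=i$ whenever $\gamma\leq u_i$ and $f(\gamma)=f(u_i)$; and assign the remaining $\gamma$'s fresh colors from $\{\ell+1,\ldots,\ell+r\}$. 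For $\gamma\in D_{i,j}$, Lemma~\ref{item:neighboring_leq} makes $\{u_i,u_j,\gamma\}$ a neighboring triple, so~\ref{com:mw_ii} forces $f(\gamma)\in\{f(u_i),f(u_j)\}$; the two values are distinct because $u_i\leftrightarrow u_j$ whenever $D_{i,j}\neq\emptyset$ (again by Lemma~\ref{item:neighboring_leq}, $u_i$ and $u_j$ share a common neighbor in $Q$, so being incomparable they are antipodal), so the assignment is well-defined and lands $g(\gamma)$ in $\{i,j\}$. A routine case analysis on the position of the elements of a neighboring triple then verifies both strong-coloring conditions for $g$, and it is precisely this interaction between upper-bound relabeling and mixed triples that is the hard part of the argument.

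For the moreover part, fix any strong $Q$-coloring $g$ with $g(u_i)=i$. Condition~\ref{item:col_7} is immediate from~\ref{com:mw_i}. For~\ref{item:col_4}, $\gamma\in D_{i,j}$ combined with Lemma~\ref{item:neighboring_leq} makes $\{u_i,u_j,\gamma\}$ a neighboring triple, so~\ref{com:mw_ii} together with $g(u_i)=i\neq j=g(u_j)$ forces $g(\gamma)\in\{i,j\}$. For~\ref{item:col_5}, given $\gamma\in D_i$ with $\gamma\leftrightarrow\gamma'\notin D_i$, Lemma~\ref{item:antipodal_D_i_Upper} supplies an upper bound $u_k$ with $k\neq i$ and $\gamma\leftrightarrow u_k$; then $\{u_i,u_k,\gamma\}$ is a neighboring triple, so~\ref{com:mw_ii} gives $g(\gamma)\in\{i,k\}$ while~\ref{com:mw_i} on $\gamma\leftrightarrow u_k$ rules out $g(\gamma)=k$, forcing $g(\gamma)=i$. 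Finally, for~\ref{item:col_6} with $\gamma\in D_{i,j}$ and $\gamma\leftrightarrow\gamma'\in D_j$, applying the just-proved~\ref{item:col_5} to $\gamma'$ (whose antipodal partner $\gamma$ lies outside $D_j$) yields $g(\gamma')=j$; combining~\ref{item:col_4} for $\gamma$ with~\ref{com:mw_i} on $\gamma\leftrightarrow\gamma'$ then forces $g(\gamma)=i$, and the case $\gamma'\in D_i$ is symmetric.
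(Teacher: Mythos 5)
Your ``moreover'' half is correct and matches the paper's argument step for step: \ref{item:col_4} via the neighboring triple $\{\gamma,u_i,u_j\}$ and~\ref{item:col_2}, \ref{item:col_5} via an antipodal upper bound $u_k$ with~\ref{com:mw_ii} and~\ref{com:mw_i}, \ref{item:col_6} by feeding the just-proved~\ref{item:col_5} to $\gamma'$, and~\ref{item:col_7} directly from~\ref{com:mw_i} (you even silently repair the paper, which misattributes this to~\ref{com:mw_ii}). The existence half, however, has two problems. First, your opening claim --- that within each component $C$ of the attachedness graph restricted to $\Upper_Q$ the $f$-colors are pairwise distinct --- is false. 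Condition~\ref{com:mw_i} forces distinct colors only on \emph{antipodal} pairs, i.e.\ adjacent upper bounds; for a path $u_1\leftrightarrow u_2\leftrightarrow u_3$ with $u_1\not\Join u_3$ nothing prevents $f(u_1)=f(u_3)$ (no neighboring triple can contain both $u_1$ and $u_3$, since a common neighboring vertex would make them attached), so no permutation of colors can send $u_1\mapsto 1$ and $u_3\mapsto 3$ simultaneously. This is exactly the situation the paper's proof is designed for: it proves a claim that when $f(u_i)=f(u_j)$ one may recolor $\Omega_i=\{\gamma\leq u_i \mid f(\gamma)=f(u_i)\}$ with one fresh color $r+1$ and stay strong, and iterates. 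Your fallback ``dragging'' construction ($g(u_i)=i$, $g(\gamma)=i$ when $\gamma\leq u_i$ and $f(\gamma)=f(u_i)$, fresh colors elsewhere) is a one-shot version of the same recoloring and does not actually need the false distinctness claim, but as written the claim is part of your argument and should be deleted.

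Second, and more seriously, you defer the verification that the dragged coloring $g$ still satisfies~\ref{com:mw_ii} to ``a routine case analysis'' while simultaneously (and correctly) calling it the hard part of the argument. That verification is the entire content of the paper's existence proof, and it is not a free checking exercise: one must show no neighboring triple acquires three $g$-colors, and the load-bearing steps are (i) if $\gamma$ is dragged to $i$ and is a neighboring subgraph of $v$, then $v\in V(u_i)$ by Lemma~\ref{item:neighboring_leq}, so any other member $\gamma'$ of the triple with $f(\gamma')=f(u_i)$ satisfies either $\gamma'\leq u_i$, in which case it is dragged to $i$ as well, or $\gamma'\leftrightarrow u_i$, contradicting~\ref{com:mw_i} for $f$; and (ii) two elements of one triple dragged to distinct anchors $u_i,u_j$ force $u_i,u_j$ to be neighboring subgraphs of the common vertex, hence attached, hence antipodal, hence $f(u_i)\neq f(u_j)$, so the triple already spent both its $f$-colors on the anchors. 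Without spelling this out (the paper does, in its claim), the existence half of your proof is incomplete: you have the right construction and cite the right tools elsewhere, but the crucial lemma-level argument is missing precisely where it is needed.
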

\begin{proof}
Let's start with the first part of the claim. By Lemma~\ref{lemma:nofull}, there are no full antipodal triple in $\Gamma_Q$. If $f(u_i)\neq f(u_j)$ for all $i\neq j\in[\ell]$, then the thesis is true. Thus let us assume that there exist $i\neq j\in[\ell]$ such that $f(u_i)=f(u_j)$. We need a preliminary that explains how to obtain a strong $Q$-coloring $g$ satisfying $g(u_i)\neq g(u_j)$ by starting from $f$.
\cadre\label{claim:u_i=u_j}
Let $i,j\in[\ell]$ be such that $f(u_i)=f(u_j)$. Let $\Omega_i=\{\gamma\in\Gamma_Q \,|\, \gamma\leq u_i \text{ and } f(\gamma)=f(u_i)\}$. Let
\begin{equation}\label{eq:r+1}
g(\gamma)=
\begin{cases}
r+1, &\text{if $\gamma\in\Omega_i$},\\
f(\gamma), &\text{otherwise},
\end{cases}
\end{equation}
then $g$ is a strong $Q$-coloring and $g(u_i)\neq g(u_j)$.
\endcadre
\begin{claimproof}{\eqref{claim:u_i=u_j}}
It is clear that $g$ satisfies~\ref{com:mw_i}. Let us assume by contradiction that $g$ does not satisfy~\ref{com:mw_ii}. Thus let $\gamma,\gamma',\gamma''\in\Gamma_Q$ be such that $|g(\{\gamma,\gamma',\gamma''\})|=3$ and $\gamma,\gamma',\gamma''$ are neighboring of $v$ for some $v\in Q$. W.l.o.g., by \eqref{eq:r+1}, we assume that $\gamma\in\Omega_i$, $\gamma',\gamma''\not\in\Omega_i$, $f(\gamma')=f(u_i)$ and $f(\gamma'')\neq f(u_i)$. Indeed, if one of these conditions do not hold, then $|g(\{\gamma,\gamma',\gamma''\})|<3$ because $f$ is a strong $Q$-coloring.

Being $v\in V(\gamma)$ and $\gamma\in\Omega_i$, then $v\in V(u_i)$ by~\ref{item:neighboring_leq}. Being $\gamma'$ neighboring of $v$, then either $\gamma'\leq u_i$ or $u_i\leftrightarrow\gamma'$. If $\gamma'\leq u_i$, then $f(\gamma')=f(u_i)$ implies $\gamma'\in\Omega_i$, absurdum. If $u_i\leftrightarrow\gamma'$, then $f(\gamma')=f(u_i)$ implies that $f$ is not a strong $Q$-coloring, absurdum. Finally, $g(u_i)\neq g(u_j)$ because $g(u_i)=r+1$ while $g(u_j)\leq r$.
\hfill\underline{End proof of \eqref{claim:u_i=u_j}}
\end{claimproof}

By repeatedly applying Claim~\ref{claim:u_i=u_j}, we obtain a strong $Q$-coloring $g$ satisfying~\ref{item:col_2}. To complete the proof, we have to prove that $g$ satifies~\ref{item:col_5},~\ref{item:col_6},~\ref{item:col_4},~\ref{item:col_7}.

Let us assume by contradiction that~\ref{item:col_5} does not hold. Then let $\gamma\in D_i$, for some $i\in[\ell]$, and let $\gamma'\not\in D_i$ be such that $\gamma\leftrightarrow\gamma'$ and assume that $g(\gamma)\neq i$. Being $\gamma'\not\in D_i$, then there exists $u_j\in \Upper_Q$ such that $\gamma'\leq u_j$ and $i\neq j$. By~\ref{item:neighboring_ant}, $\gamma,\gamma'$ are neighboring of $v$, for some $v\in Q$, moreover, by~\ref{item:neighboring_leq}, $u_i,u_j$ are neighboring of $v$. Thus $\gamma,u_i,u_j$ are neighboring of $v$, implying that $\gamma\leftrightarrow u_j$ because $\gamma\in D_i$. Finally,~\ref{com:mw_i} imply $g(\gamma)\neq j$, that implies $|g(\{\gamma,u_i,u_j\}|=3$, absurdum because $g$ is a strong $Q$-coloring.

Let us assume that~\ref{item:col_6} does not hold. Then there exist $\gamma\in D_{i,j}$, $\gamma'\in D_j$ such that $\gamma\leftrightarrow\gamma'$ and $g(\gamma)\neq i$ (the case in which $\gamma'\in D_i$ and $g(\gamma)\neq j$ is similar). As above, by~\ref{item:neighboring_ant} and~\ref{item:neighboring_leq}, it holds that $\gamma,\gamma',u_i,u_j$ are neighboring of $v$ for some $v\in Q$. Moreover, $\gamma\leftrightarrow u_j$ and thus~\ref{item:col_5} implies $g(\gamma')=j$. Being $g$ a strong $Q$-coloring and being $\gamma\leftrightarrow\gamma'$,~\ref{com:mw_i} implies $g(\gamma)\neq j$. Thus $|g(\{\gamma,u_i,u_j\})|=3$, absurdum.

If~\ref{item:col_4} does not hold for a $\gamma\in D_{i,j}$, for some $i<j\in[\ell]$, then $\gamma,u_i,u_j$ would deny~\ref{com:mw_ii}. Indeed, $\gamma,u_i,u_j$ are neighboring of $v$, for every $v\in V(\gamma)\cap Q$, by~\ref{item:neighboring_leq}. Thus $\gamma,u_i,u_j$ deny~\ref{com:mw_ii} because of~\ref{item:col_2}, absurdum.

Finally,~\ref{item:col_7} is implied by~\ref{com:mw_ii}.
\end{proof}

The following proposition is the ``if part'' of the proof of Theorem~\ref{th:characterizazion_1}.

\begin{prop}\label{prop:characterization_1_if}
Let $Q$ be a clique separator of $G$. If $G$ is strong $Q$-colorable, then $\Gamma_Q$ has  no full antipodal triple and $G$ is weak $Q$-colorable.
\end{prop}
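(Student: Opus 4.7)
I would split the proof in two steps. For the first claim, absence of a full antipodal triple, I simply invoke Lemma~\ref{lemma:nofull}. For the existence of a weak $Q$-coloring, my starting point is Lemma~\ref{lemma:strong_is_partial}, which promotes the given strong $Q$-coloring to one, say $g : \Gamma_Q \to [r']$, that on top satisfies~\ref{item:col_2} and, as byproducts, also~\ref{item:col_5}, \ref{item:col_6}, \ref{item:col_4} and~\ref{item:col_7}. Only condition~\ref{item:col_3} (the restriction $g(D_i) \subseteq \{i,\ell+1\}$) remains to be arranged, and to do so I would collapse to the single fresh color $\ell+1$ every ``foreign'' value appearing inside some $D_i$.

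Concretely, I would define $g' : \Gamma_Q \to [\ell+1]$ by
\begin{equation*}
g'(\gamma) = \begin{cases}
g(\gamma) & \text{if } \gamma \in D_{i,j} \text{ for some } i<j,\\
i & \text{if } \gamma \in D_i \text{ and } g(\gamma)=i,\\
\ell+1 & \text{if } \gamma \in D_i \text{ and } g(\gamma)\neq i.
\end{cases}
\end{equation*}
Since $g$ already satisfies~\ref{item:col_4}, the top case lands in $\{i,j\}$, so conditions~\ref{item:col_2}, \ref{item:col_3} and~\ref{item:col_4} are built into the definition. Conditions~\ref{item:col_5} and~\ref{item:col_6} transfer from $g$ to $g'$ verbatim, since in both cases $g$ is already forced to take the prescribed value and $g'$ coincides with $g$ there.

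The only substantive point, which is the main obstacle, is condition~\ref{item:col_7} inside some $D_i$: a priori, two antipodal elements $\gamma,\gamma'\in D_i$ might both get color $\ell+1$. I would rule this out by a key claim where the hypothesis ``no full antipodal triple'' meets the strong coloring. Assume $\gamma,\gamma'\in D_i$ satisfy $g(\gamma)\neq i$, $g(\gamma')\neq i$, and $\gamma\leftrightarrow\gamma'$; by~\ref{item:neighboring_ant} pick $v\in V(\gamma)\cap V(\gamma')\cap Q$ witnessing the antipodality, and by~\ref{item:neighboring_leq} applied to $\gamma\leq u_i$ observe $v\in V(u_i)\cap Q$, so $\{\gamma,\gamma',u_i\}$ is a neighboring triple at $v$. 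Then $g(u_i)=i$ together with $g(\gamma)\neq i$, $g(\gamma')\neq i$ and $g(\gamma)\neq g(\gamma')$ (the latter by~\ref{com:mw_i}) force $|g(\{\gamma,\gamma',u_i\})|=3$, contradicting~\ref{com:mw_ii}. With this claim in hand, condition~\ref{item:col_7} for $g'$ on each $D_i$ reduces to a trivial case split (the remaining cases have one of $\gamma,\gamma'$ coloured $i$ and the other $\ell+1$), while on $D_{i,j}$ it is inherited from $g$. Hence $g'$ is the sought weak $Q$-coloring.
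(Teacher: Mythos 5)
Your proposal is correct and follows essentially the same route as the paper: Lemma~\ref{lemma:nofull} for the absence of a full antipodal triple, Lemma~\ref{lemma:strong_is_partial} to normalize the strong coloring, and the identical key claim that two antipodal elements of $D_i$ cannot both avoid color $i$, proved via the neighboring triple $\{\gamma,\gamma',u_i\}$ and conditions~\ref{com:mw_i}--\ref{com:mw_ii}. The only difference is cosmetic: the paper recolors in two stages (first to intermediate colors $\ell+i$, then merging to $\ell+1$ after checking that elements recolored in distinct $D_i$, $D_j$ are never antipodal, so the coloring stays strong throughout), whereas your one-step collapse to $\ell+1$ is legitimate because condition~\ref{item:col_7} only constrains antipodal pairs within a single member of $\mathcal{D}$.
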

\begin{proof}
Let $f:\Gamma_Q\to[r]$ be a strong $Q$-coloring. We have to find $g:\Gamma_Q\to[\ell+1]$ such that $g$ is a weak $Q$-coloring and prove that there are no full antipodal triples in $\Upper_Q$. First we observe that~\ref{com:mw_i} and~\ref{com:mw_ii} implies that there are no full antipodal triples in $\Gamma_Q$.

By Lemma~\ref{lemma:strong_is_partial}, we can assume that $f$ satisfies conditions~\ref{item:col_2},~\ref{item:col_5},~\ref{item:col_6},~\ref{item:col_4},~\ref{item:col_7} and that $r\geq\ell$.

For all $i\in[\ell]$ let $\Omega_i=\{\gamma\in D_i\,|\, f(\gamma)\neq i\}$. Now, let $i\in[\ell]$. For all $\gamma,\gamma'\in\Omega_i$ it holds that $\gamma\not\leftrightarrow\gamma'$, indeed, if not, then $f(\gamma)\neq f(\gamma')$ because of~\ref{com:mw_i}, and this implies $|f(\{\gamma,\gamma',u_i\})|=3$, absurdum because $f$ a strong $Q$-coloring. Thus if we define
\begin{equation*}
g(\gamma)=
\begin{cases}
\ell+i, &\text{if }\gamma\in\Omega_i,\\
f(\gamma), &\text{otherwise,}\\
\end{cases}
\end{equation*}
then $g:\Gamma_Q\to[2\ell]$ is a strong $Q$-coloring because $f$ is a strong $Q$-coloring and we used the same color for a class of non-antipodal elements.

For all distinct $i,j\in[\ell]$, for all $\gamma\in\Omega_i$ and $\gamma'\in\Omega_j$, it holds that $\gamma\not\leftrightarrow\gamma'$. Indeed, if $\gamma\leftrightarrow\gamma'$, then, by assuming $\gamma\in D_i$ and $\gamma'\in D_j$, condition~\ref{item:col_5} implies $f(\gamma)=i$ and $f(\gamma')=j$ and thus $\gamma\not\in\Omega_i$ and $\gamma'\not\in\Omega_j$, absurdum. Finally, let $\Omega=\bigcup_{i\in[\ell]}\Omega_i$ and let
\begin{equation*}
h(\gamma)=
\begin{cases}
\ell+1, &\text{if }\gamma\in\Omega,\\
g(\gamma), &\text{otherwise,}\\
\end{cases}
\end{equation*}
it is clear that $g$ satisfies~\ref{item:col_3}, moreover,~\ref{item:col_3} and~\ref{item:col_4} imply $g:\Gamma_Q\rightarrow [\ell+1]$. By the same above reasoning, $g$ is a strong $Q$-coloring and this finishes the proof.
\end{proof}

\subsubsection{Weak coloring implies strong coloring}\label{sub:weak_implies_strong}

Now we prove the ``only if part'' of Theorem~\ref{th:characterizazion_1}.

\begin{prop}\label{prop:characterization_1_only_if}
Let $Q$ be a clique separator of $G$. If $\Gamma_Q$ has no full antipodal triple and $G$ is weak $Q$-colorable, then $G$ is strong $Q$-colorable.
\end{prop}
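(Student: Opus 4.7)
My plan is to show that the given weak $Q$-coloring $f\colon\Gamma_Q\to[\ell+1]$ is itself a strong $Q$-coloring: no further modification is needed. It suffices to verify conditions~\ref{com:mw_i} and~\ref{com:mw_ii} of Definition~\ref{def:strong_colorability} directly from the weak-coloring axioms.

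Condition~\ref{com:mw_i} is the easier half. Fix $\gamma\leftrightarrow\gamma'$. By Remark~\ref{remark:D_i_is_a_partition} the family $\mathcal{D}$ partitions $\Gamma_Q$, and Lemma~\ref{lemma:elenco_trivial_2} leaves only three possible configurations: (A) $\gamma,\gamma'$ lie in the same block $D\in\mathcal{D}$; (B) $\gamma\in D_i$, $\gamma'\in D_j$ with $i\neq j$; or (C) one of them lies in $D_i$ and the other in $D_{i,j}$. In (A), condition~\ref{item:col_7} gives $f(\gamma)\neq f(\gamma')$; in (B), condition~\ref{item:col_5} forces $f(\gamma)=i$ and $f(\gamma')=j$; in (C), conditions~\ref{item:col_5} and~\ref{item:col_6} combine to produce the distinct colors $i$ and $j$.

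For condition~\ref{com:mw_ii}, take a neighboring triple $\{\gamma,\gamma',\gamma''\}$ at some $v\in Q$. Pairwise attachedness and the no-full-antipodal-triple hypothesis place the triple in one of diagrams $(ii)$--$(v)$ of~\eqref{diagram:triples}. I would split each diagram further according to the $\mathcal{D}$-blocks containing the three elements (Lemma~\ref{lemma:elenco_trivial_2} prunes most combinations) and, in each sub-case, apply the following \emph{auxiliary-upper trick}: if the triple contains $\eta\in D_{k,m}$ and some $\xi\in D_k$ with either $\eta\leq\xi$ or $\eta\leftrightarrow\xi$, then by Lemma~\ref{lemma:elenco_trivial_1} both $\xi$ and $u_m$ are neighboring of any vertex where $\eta$ and $\xi$ already are; pairwise attachedness of $\{\eta,\xi,u_m\}$ together with $\xi\not\leq u_m$ (as $\xi\in D_k\setminus D_{k,m}$) and $u_m\not\leq\xi$ (as $u_m$ is an upper) force $\xi\leftrightarrow u_m$, and then $u_m\notin D_k$ together with~\ref{item:col_5} pins $f(\xi)=k$. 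One or two applications of this trick per sub-case, possibly symmetrically with $u_k$ in place of $u_m$, collapse the colors $\{f(\gamma),f(\gamma'),f(\gamma'')\}$ to at most two values. In the residual sub-cases where the trick would force two antipodal elements of a common block to share a color, condition~\ref{item:col_7} would already be violated; such configurations therefore cannot occur under the hypothesis that $G$ is weak $Q$-colorable.

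The main obstacle is the bookkeeping of this sub-case analysis: each of diagrams $(ii)$--$(v)$ branches according to whether each of the three elements lies in $\Upper_Q$, in a singleton $D_k$, or in a two-element $D_{k,m}$, yielding roughly a dozen viable configurations. The individual colour-counting arguments are short, but one must systematically verify that in every case the auxiliary-upper trick caps the color set at two, and cleanly identify the handful of configurations that are instead ruled out as incompatible with weak $Q$-colorability.
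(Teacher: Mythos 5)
Your proposal is correct and follows essentially the same route as the paper: it too verifies that the weak $Q$-coloring is itself a strong $Q$-coloring, proving~\ref{com:mw_i} by the same block-by-block case analysis (via~\ref{item:col_5},~\ref{item:col_6},~\ref{item:col_7}) and~\ref{com:mw_ii} through diagrams $(ii)$--$(v)$ of~\eqref{diagram:triples}, where your ``auxiliary-upper trick'' is precisely the paper's Claim~\eqref{claim:strong_weak}. The casework you defer goes through exactly as you predict---in the paper every sub-case caps the color set at two directly, so your fallback of ruling out residual configurations via a violation of~\ref{item:col_7} is never needed.
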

\begin{proof}
First we observe that if there is a full antipodal triple, then $\Gamma_Q$ is not strong $Q$-colorable neither weak $Q$-colorable. Thus we assume that there are no full antipodal triples. By Remark~\ref{remark:D_i_is_a_partition}, it holds that $\Gamma_Q=\mathcal{D}$.

Let $f$ be a weak $Q$-coloring of $G$. We have to prove that~\ref{com:mw_i} and~\ref{com:mw_ii} are satisfied. Let $\gamma,\gamma'\in\Gamma_Q$ be such that $\gamma\leftrightarrow\gamma'$. We want to prove that $f(\gamma)\neq f(\gamma')$. Let $i<j\in[\ell]$. Because of the absence of full antipodal triples, then there are four cases: $\gamma,\gamma'\in D_i$ (case 1), $\gamma,\gamma'\in D_{i,j}$ (case 2), $\gamma\in D_i$ and $\gamma'\in D_{i,j}$ (case 3), $\gamma\in D_i$ and $\gamma\in D_j$ (case 4) (there are no other cases because of 
\ref{item:antipodal_k_k'_1}).

If case 1 or case 2 happens, then $f(\gamma)\neq f(\gamma')$, by~\ref{item:col_7}. If case 3 happens, then $f(\gamma)\neq f(\gamma')$, by~\ref{item:col_5} and~\ref{item:col_6}. Finally, if case 4 happens, then $i=f(\gamma)\neq f(\gamma')=j$, by~\ref{item:col_5}. Thus~\ref{com:mw_i} is satisfied.

Now it remains to prove that~\ref{com:mw_ii} holds. Let us assume by contradiction that there exist $\gamma,\gamma',\gamma''\in\Gamma_Q$ such that they form a neighboring triple and $|f(\{\gamma,\gamma',\gamma''\})|=3$. In \eqref{diagram:triples} there are all possibilities of relations between these three elements. Case $(i)$ can not apply because it is a full antipodal triple. Thus we have to prove that in cases $(ii)$,$(iii)$,$(iv)$,$(v)$ $|f(\{\gamma,\gamma',\gamma''\})|<3$ (it is only a fact of checking).

Before we examine every case, we need the following claim.
\cadre\label{claim:strong_weak}
In any case among $(ii)$,$(iii)$,$(iv)$,$(v)$ if there are $\eta\in D_i$ and $\eta'\in D_j$, then $f(\eta)=i$ and $f(\eta')=j$. Similarly, if there are $\mu\in D_{i,j}$ and $\mu'\in D_i$ (resp., $\mu'\in D_j$), then $f(\mu')=i$ (resp., $f(\mu')=j)$.
\endcadre
\begin{claimproof}{\eqref{claim:strong_weak}} 
We note that any case among $(ii)$,$(iii)$,$(iv)$,$(v)$ is a neighboring triple. Thus condition~\ref{item:neighboring_leq} and~\ref{item:neighboring_ant} imply $\eta\leftrightarrow u_j$ and $\eta'\leftrightarrow u_i$, hence $f(\eta)=i$ and $f(\eta')=j$ because of~\ref{item:col_5}. The second part of the claim holds with a similar reasoning.\hfill\underline{End proof of \eqref{claim:strong_weak}}
\end{claimproof}

Let's start with cases $(ii)$ and $(v)$. If $\gamma\in D_i$, for some $i\in[\ell]$, then $\gamma',\gamma''\in D_i$ because of transitivity of $\leq$, thus $f(\{\gamma,\gamma',\gamma''\})\subseteq\{i,\ell+1\}$ by~\ref{item:col_3}. Else, $\gamma\in D_{i,j}$, for some $i<j\in[\ell]$, then $\gamma',\gamma''\in D_i\cup D_j\cup D_{i,j}$ because of~\ref{item:geq_i,j}, thus $f(\{\gamma,\gamma',\gamma''\})\subseteq\{i,j\}$ by~\ref{item:col_3},~\ref{item:col_4} and Claim~\ref{claim:strong_weak}.

Now we deal with case $(iv)$. For short, for any $i\in[\ell]$, we define $\overline{D}_i=(\bigcup_{j<i}D_{j,i})\cup(\bigcup_{j>i}D_{i,j})$. If $\gamma''\in D_{i,j}$, for some $i<j\in[\ell]$, then $\gamma,\gamma'\in D_{i,j}$ because of transitivity of $\leq$, thus $f(\{\gamma,\gamma',\gamma''\})\subseteq\{i,j\}$ by~\ref{item:col_4}. Else, $\gamma''\in D_i$, for some $i\in[\ell]$, then $\gamma,\gamma'\in D_i\cup \overline{D}_{i}$ because of~\ref{item:antipodal_k_k'_1}. There are two sub-cases: either $\gamma,\gamma',\gamma''\in D_i$, or at least one among $\gamma,\gamma'$ is in $\overline{D}_i$. If the first sub-case happens, then we have finished by~\ref{item:col_3}. Otherwise, w.l.o.g., let us assume that $\gamma\in D_{i,j}$ for some $j>i$. Thus $u_j\leftrightarrow\gamma''$, and, by~\ref{item:col_5}, $f(\gamma'')=i$, implying that $f(\{\gamma,\gamma',\gamma''\})\subseteq\{i,j\}$.

It remains to check case $(iii)$. If $\gamma''\in D_{i,j}$, for some $i<j\in[\ell]$, then $\gamma\in D_{i,j}$. By~\ref{item:antipodal_k_k'_1}, $\gamma'\in D_i\cup D_j\cup D_{i,j}$. Thus $f(\{\gamma,\gamma',\gamma''\})\subseteq\{i,j\}$ because of~\ref{item:col_5},~\ref{item:col_6} and~\ref{item:col_7}. Else, $\gamma''\in D_i$, for some $i\in[\ell]$, then there are two sub-cases: either $\gamma\in D_i$, or $\gamma\in D_{i,j}$, for some $i<j\in[\ell]$ (the sub-case $\gamma\in D_{j,i}$ is similar). If the first sub-case happens, then $|f(\{\gamma,\gamma',\gamma''\})|<3$ because $f(\gamma'')=f(\gamma)=i$ by~\ref{item:col_5}. If the second sub-case happens, then $\gamma'\in D_i\cup D_j\cup D_{i,j}$ by above. Thus $f(\{\gamma,\gamma',\gamma''\})\subseteq\{i,j\}$ by~\ref{item:col_3},~\ref{item:col_4} and Claim~\ref{claim:strong_weak}.
\end{proof}

\section{Forbidden subgraphs in attachedness graphs}\label{section:forbidden_subgraphs}

In Subsection~\ref{sub:2-coloring} we described exactly which are the obstructions to the weak coloring, thus we can  list all the obstructions of path graphs in the form of subgraphs of the $Q$-attachedness graphs of a chordal graph $G$, and in Subsection~\ref{sub:LMP} we compare our obstructions with obstructions in~\cite{bfm}. Recall that the $Q$-attachedness graph of $G$ is the graph $(\Gamma_Q,\Join)$ with reflexive pairs neglected---whose edges are therefore distinct pairs $\gamma\gamma'$, $\gamma,\gamma'\in \Gamma_Q$ such that $\gamma\Join\gamma'$. Also recall that the $Q$-antipodality and the $Q$-dominance graph of $G$ factor $(\Gamma_Q,\Join)$. Such a factorization yields a 2-edge coloring of $(\Gamma_Q,\Join)$ which models the interactions between $\leftrightarrow$ and $\leq$. 

In the following definition we present an uncolored version of our obstructions to path graphs.

\begin{definition}
-- For an integer $m$ such that $m\geq 3$, the $m$-\emph{wheel} is the graph on $[m+1]$ where the vertices in $[m]$ induces a cycle and vertex $m+1$ is adjacent to all the other vertices (see Figure~\ref{fig:wheelsandfans}.a).\\
-- For an integer $m$ such that $m\geq 4$, the \emph{$m$-fan} is the graph on $[m]$ such that $[m-1]$ induces a path having end-vertices $1$ and $m-1$ and vertex $m$ is adjacent to all the other vertices (see Figure~\ref{fig:wheelsandfans}.b).\\
-- The \emph{$m$-chorded fan} is the graph obtained from the $m$-fan by adding an edge between vertices $1$ and $m-1$. Notice that the $m$-chorded fan is isomorphic to the $m-1$-wheel (see Figure~\ref{fig:wheelsandfans}.c).\\
-- For an integer $m$ such that $m\geq 4$, the \emph{$m$-double fan} is the graph on $[m]$ such that $[m]$ induces a cycle and vertices $m-1$ and $m$ are adjacent to all other vertices (see Figure~\ref{fig:wheelsandfans}.d).
\end{definition}

\begin{figure}[h]
\captionsetup[subfigure]{justification=centering}
\centering
%FIGURA 1
	\begin{subfigure}{2.6cm}
\begin{overpic}[width=2.6cm,percent]{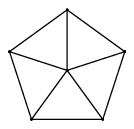}
\end{overpic}
\caption{}\label{fig:ostr_a}
\end{subfigure}
\qquad
%FIGURA 2
	\begin{subfigure}{2.6cm}
\begin{overpic}[width=2.6cm,percent]{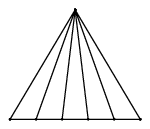}
\end{overpic}
\caption{}\label{fig:ostr_b}
\end{subfigure}
\qquad
%FIGURA 3
	\begin{subfigure}{2.6cm}
\begin{overpic}[width=2.6cm,percent]{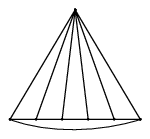}
\end{overpic}
\caption{}\label{fig:ostr_c}
\end{subfigure}	
\qquad
%FIGURA 4
	\begin{subfigure}{2.6cm}
\begin{overpic}[width=2.6cm,percent]{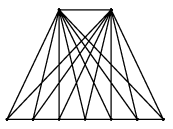}
\end{overpic}
\caption{}\label{fig:ostr_d}
\end{subfigure}	
  \caption{(\subref{fig:ostr_a}) $5$-wheel; (\subref{fig:ostr_b}) $7$-fan; (\subref{fig:ostr_c}) $7$-chored-fan; (\subref{fig:ostr_d}) $9$-double fan.}
\label{fig:wheelsandfans}
\end{figure}

Figure~\ref{fig:ostruzioni_indotte} lists certain special 2-edge-colored graphs, obtained as 2-edge-colored versions of the aforesaid graphs, needed in the characterization of path graphs (Theorem~\ref{cor:all}). The two colors are represented by dotted or solid lines, respectively.

\begin{figure}[h]
\centering
\begin{overpic}[width=10cm]{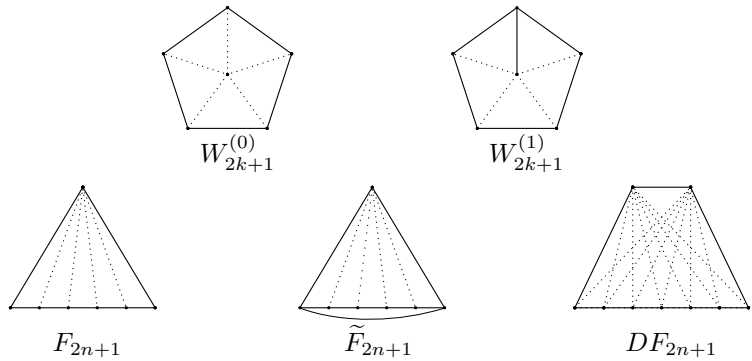} 
\put(26,26){$W^{(0)}_{2k+1}$}    
\put(64,26){$W^{(1)}_{2k+1}$}  
\put(6.7,1){$F_{2n+1}$}  
\put(45,1){$\widetilde{F}_{2n+1}$}  
\put(82,1){$DF_{2n+1}$}         
\end{overpic}
\caption{2-edge-colored graphs occurring in Theorem~\ref{cor:all}, $k\geq1$ and $n\geq2$.}
\label{fig:ostruzioni_indotte}
\end{figure}

It is convenient to settle a specific notation and terminology to present the results. An \emph{isomorphism of edge-colored graphs} is a graph isomorphism which preserves edge colors. All of the 2-edge-colored graphs in Figure~\ref{fig:ostruzioni_indotte} are pairwise non isomorphic as edge-colored graphs. We denote by $\mathcal{F}$ the collection they form ---$\mathcal{F}$ stands for ``forbidden''. Hence
$$\mathcal{F}=\left\{W_{2k+1}^{(0)},\,W_{2k+1}^{(1)},\, F_{2n+1}, \, \widetilde{F}_{2n+1},\, DF_{2n+1} \ |\ k\geq 1,\,n\geq 2\right\}.$$
Also let 
$$\mathcal{F}_0=\left\{W_{2k+1}^{(0)},W_{2k+1}^{(1)},F_{2n+1}\right\}.$$
Triangles of attachedness graphs play a special role. A triangle which is induced by a neighboring triple the $Q$-attachedness graph of $G$ is called a \emph{full triangle}, otherwise it is called \emph{empty}. A triangle all whose edges are antipodal is an \emph{antipodal triangle}. Not every triangle in $Q$-attachedness graph of $G$ is full, indeed an antipodal triangle might be empty (recall the discussion right after Lemma~\ref{lemma:elenco_trivial_1}). 

Unfortunately there is no way to establish whether an antipodal triangle is full or empty, let's see an example. Let $G$ be the graph $F_2$ in Figure~\ref{fig:lmp}, $G$ has only one separator, $Q$, say, let $H$ and $M$ be its $Q$-antipodality and $Q$-attachedness graphs. Hence $M=H\cong K_3$ and the triangle spans a neighboring triple. However, if we denote by $z$ the universal vertex of $G$, then $G-z$  is separated by $Q\setminus z$. Again, let $Q'=Q\setminus z$ be the only clique separator, it holds that $M=H\cong K_3$ but the triangle does not span a neighboring triple.

We know that full antipodal triples are obstructions to strong $Q$-colorability. Therefore, full antipodal triangles are obstructions to membership in the class of path graphs and they should be added to $\mathcal{F}$. However, since full antipodal triangles are not just edge colored triangles (because they have also the property of being full), we must treat such triangles separately in our statements. To overcome this (somehow unaesthetic and noising) ambiguity we use a standard trick.

For a graph $G$ let $G^+$ be the graph defined as follows. Let $V(G)=V=\{v_1,v_2,\ldots,v_n\}$ and $V^+$ be a copy of $V$, $V^+=\{v_1^+,v_2^+,\ldots,v_n^+\}$. Let
\begin{equation*}
	G^+=\left(V\cup V^+,E(G)\cup\{v_iv_i^+\}_{i=1}^n\right).
\end{equation*}

\begin{lemma}\label{lemma:trick} Let $G$ be a graph. Then $G$ is a path graph if and only if $G^+$ is a path graph.
\end{lemma}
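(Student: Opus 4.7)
The plan is to handle the two directions separately, with the forward direction being immediate from induced-subgraph closure and the reverse direction being a straightforward modification of the host tree.

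For the ``if'' direction, observe that $G$ is the induced subgraph of $G^+$ on the vertex set $V$. Since the class of path graphs is closed under taking induced subgraphs (this is stated in the introduction), if $G^+$ is a path graph then so is $G$. There is nothing more to do here.

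For the ``only if'' direction, suppose $G$ is realized by a host tree $T$ and a family of paths $\mathcal{P}=\{p_{v_i}\}_{i=1}^n$ with bijection $\phi(v_i)=p_{v_i}$. I will construct a host tree $T^+$ and a realizing family for $G^+$. For each $i\in[n]$, let $t_i$ be one of the two endpoints of $p_{v_i}$ in $T$ (if $p_{v_i}$ is a single vertex, take $t_i$ to be that vertex). Form $T^+$ from $T$ by adding, for each $i$, a new leaf $\ell_i$ joined by a new edge to $t_i$. Define $\phi^+(v_i)=p_{v_i}\cup\{\ell_i\}$, which is a path in $T^+$ since we extended $p_{v_i}$ by a pendant edge at one of its endpoints, and define $\phi^+(v_i^+)=\{\ell_i\}$, a trivial one-vertex path.

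It then remains to verify that this realization induces exactly the edges of $G^+$. For two original vertices $v_i,v_j$ with $i\ne j$, the leaves $\ell_i,\ell_j$ are distinct and absent from $T$, so $\phi^+(v_i)\cap\phi^+(v_j)=p_{v_i}\cap p_{v_j}$; adjacency in $G^+$ between them matches adjacency in $G$. For $v_i$ and $v_j^+$, we have $\phi^+(v_i)\cap\{\ell_j\}$, which is nonempty iff $\ell_j\in p_{v_i}\cup\{\ell_i\}$, i.e.\ iff $j=i$, exactly matching the edge set $\{v_iv_i^+\}$. For two pendants $v_i^+,v_j^+$ with $i\ne j$, $\{\ell_i\}\cap\{\ell_j\}=\emptyset$, matching the nonadjacency in $G^+$. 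Hence $G^+$ is a path graph. I do not anticipate any real obstacle; the only small point of care is choosing $t_i$ to be an endpoint of $p_{v_i}$ so that $p_{v_i}\cup\{\ell_i\}$ remains a path rather than merely a subtree.
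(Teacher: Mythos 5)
Your proof is correct, and it implements the same underlying idea as the paper's proof---extend the structure associated with each vertex $v$ by a pendant attached at an \emph{end}---but in a different framework. The paper argues via Gavril's characterization (Theorem~\ref{thm:gavril}): it takes a clique path tree $T$ of $G$ and, for each $v$, attaches the new clique $\{v,v^+\}$ as a new node adjacent to an end-vertex $\tilde{Q}_v$ of the path of $T$ induced by the cliques containing $v$, which immediately yields a clique path tree of $G^+$. You instead work directly with the intersection model: you add a new leaf $\ell_i$ to the host tree at an endpoint of $p_{v_i}$, extend $p_{v_i}$ through it, and represent $v_i^+$ by the singleton path $\{\ell_i\}$. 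Your route is more elementary and self-contained: it avoids invoking Theorem~\ref{thm:gavril} and the tacit identification of the maximal cliques of $G^+$ as the cliques of $G$ together with the pairs $\{v,v^+\}$, at the cost of the explicit three-case verification of intersections, which you carry out correctly (including the one point where care is genuinely needed, namely choosing $t_i$ to be an endpoint of $p_{v_i}$ so that $p_{v_i}\cup\{\ell_i\}$ remains a path and not merely a subtree). The ``if'' direction is identical in both proofs: closure of path graphs under induced subgraphs. Neither argument has a gap; the two are equivalent up to the standard translation between clique path trees and host-tree realizations.
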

\begin{proof}
	Since $G$ is an induced subgraph of $G^+$, $G$ is a path graph if $G^+$ is such.
	Let $T$ be a clique path tree of $G$. For all $v\in V(G)$, let $\mathcal{K}_v$ the set of all cliques of $G$ containing $v$. By Theorem~\ref{thm:gavril}, $\mathcal{K}_v$ induces a path in $T$, let $\tilde{Q}_v\in \mathcal{K}_v$ be an end-vertex of this path. Thus it suffices to join $vv^+$ to $\tilde{Q}_v$ for all $v\in V(G)$ to yield a clique path tree for $G^+$.
\end{proof}
The reason for having introduced graph $G^+$ relies on the fact that, for every clique separator $Q$ of $G^+$, full antipodal triangles of $G$ appear in $Q$-attachedness graph of $G^+$ as small wheels as shown next. 
\begin{lemma}\label{lemma:g+}
	Let $Q$ be a clique separator of $G^+$ and let $M$ be the $Q$-attachedness graph of $G^+$. Then $M$ has no full antipodal triangle and has no induced copy of $W_{2k+1}^{(0)}$if and only if $M$ has no induced copy of $W_{2k+1}^{(0)}$.
\end{lemma}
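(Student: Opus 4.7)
The forward implication is immediate, since the right-hand condition is a strict weakening of the left. For the reverse implication I would argue the contrapositive: if $M$ has a full antipodal triangle, then $M$ contains an induced copy of $W^{(0)}_{3}$ (that is, an induced $W^{(0)}_{2k+1}$ with $k=1$).

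First I would analyze the possible shapes of $Q$. The maximal cliques of $G^+$ are either maximal cliques of $G$ of size at least two or the pendant edges $\{u,u^+\}$, because any maximal clique of $G^+$ containing some $u^+$ is forced into the neighborhood of $u^+$ and hence equals $\{u,u^+\}$. In the pendant case $Q=\{u,u^+\}$, every relevant clique of any $\gamma\in\Gamma_Q$ has $Q$-trace exactly $\{u\}$ (because the only clique containing $u^+$ is $Q$ itself), so all $Q$-traces coincide and $M$ carries no antipodal edge whatsoever; the claim is then vacuous. We may therefore assume $Q\subseteq V(G)$ is a maximal clique of $G$ of size at least two.

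Next I would describe $\Gamma_Q$ explicitly. The components of $G^+-Q$ are the augmented components $V_i^+:=V_i\cup\{v^+:v\in V_i\}$, one for each component $V_i$ of $G-Q$, together with the isolated leaves $\{u^+\}$ for each $u\in Q$. Write $\gamma_i^+=G^+[V_i^+\cup Q]$ and $\gamma_u=G^+[\{u^+\}\cup Q]$. Because each $v^+$ borders only $v$, the relevant cliques of $\gamma_i^+$ coincide with those of $\gamma_i=G[V_i\cup Q]$, so the relations $\leftrightarrow$ and $\leq$ among the $\gamma_i^+$ reproduce those among the $\gamma_i$ in the $Q$-attachedness graph of $G$. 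By contrast, each pendant $\gamma_u$ has the single relevant clique $\{u,u^+\}$ with $Q$-trace the singleton $\{u\}$; this singleton is comparable to every $Q$-trace it meets, so $\gamma_u$ participates in no antipodal edge, and it is attached in $M$ to exactly those $\gamma'$ such that some relevant clique of $\gamma'$ contains $u$, via a dominance edge.

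Now suppose $\{\gamma_1,\gamma_2,\gamma_3\}$ is a full antipodal triangle of $M$, witnessed by some $v\in Q$ lying in a relevant clique of each $\gamma_j$. By the previous paragraph none of the $\gamma_j$ can be a pendant $\gamma_u$, so all three are augmented components. The pendant $\gamma_v$ is attached to each $\gamma_j$ because $v$ lies in a relevant clique of each, and every such edge is necessarily a dominance edge. The four vertices $\{\gamma_1,\gamma_2,\gamma_3,\gamma_v\}$ thus induce in $M$ a $K_4$ whose outer triangle is pairwise antipodal and whose three edges incident to $\gamma_v$ are all dominance edges---precisely the 2-edge-colored graph $W^{(0)}_{3}$, contradicting the assumption that $M$ has no induced $W^{(0)}_{2k+1}$.

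The main obstacle is conceptual: one has to realize that the ``hub'' of the forbidden wheel must be sought among the artificial pendant subgraphs $\gamma_u$ introduced by the $(\cdot)^+$ construction, rather than among the augmented versions of ordinary components of $G-Q$; once this is observed, the verification that the induced four-vertex subgraph is genuinely $W^{(0)}_{3}$ reduces to the colouring check performed above, which follows directly from the structural properties of the $\gamma_u$.
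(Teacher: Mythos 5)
Your proof is correct and takes essentially the same route as the paper's: you first dispose of clique separators meeting the copy vertices (where $Q=\{u,u^+\}$ forces all $Q$-traces to equal $\{u\}$, so no antipodal edges exist), and then attach the pendant subgraph $\gamma_v$ induced by $\{v,v^+\}\cup Q$ --- which is $\leq$-dominated by every neighboring subgraph of $v$ --- to the three members of a full antipodal triangle witnessed by $v$, producing an induced $W_3^{(0)}$. Your more explicit description of $\Gamma_Q$ (augmented components versus pendant leaves) is additional detail, but the key idea, using the pendant subgraph as the wheel's hub, is exactly the paper's.
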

\begin{proof}
	One direction is trivial. For the other direction it suffices to prove that if $M$ has no induced copy of $W_{3}^{(0)}$, then $M$ has no full antipodal triangle. We prove the contrapositive: if $M$ has a full antipodal triangle, then $M$ has an induced copy of $W_{3}^{(0)}$. Observe first that $Q\cap \{v^+ \ |\ v\in V(G)\}=\emptyset$. For if not, then $Q$ is necessarily of the form $\{v,v^+\}$ for some $v\in V(G)$ (notice that in this case $v$ is a cut vertex); in this case however $M$ would contain no antipodal edges at all and thus no full antipodal triangles. Hence $v^+\not\in Q$ for each $v\in Q$. Notice that for each $v\in Q$ the graph $\gamma^+=(\{v,v^+\},\{vv^+\})$ is $\leq$-dominated by every other neighboring subgraph $\gamma$ of $v$. Let $\{\gamma,\gamma',\gamma''\}$ be the set of vertices of a full antipodal triangle in $M$. Hence, there is some $z\in V(G)$ such that $\gamma$, $\gamma'$ and $\gamma''$ are neighboring subgraphs of $z$. If  $\gamma_z$ is the subgraph of $G$ induced by $\{z,z^+\}\cup Q$, then $\{\gamma_z,\gamma,\gamma',\gamma''\}$ induces a copy of $W_3^{(0)}$ in $M$.
\end{proof}
In the following theorem we claim our characterization by forbidden subgraphs in the attachedness graphs. Note that the graphs in $\mathcal{F}$ are induced obstructions, while the graphs in $\mathcal{F}_0$ are not necessarily
induced. Moreover, statements~\ref{itfin:nof0+} and~\ref{itfin:nof+} are equivalent to~\ref{itfin:nof0} and~\ref{itfin:nof}, respectively, by using $G^+$ in place of $G$ thanks to Lemma~\ref{lemma:trick} and Lemma~\ref{lemma:g+}.

\begin{theorem}\label{cor:all}
Let $G$ be a chordal graph. Then the following statements are equivalent:
	\begin{enumerate}[label=\subscript{\alph*}{\rm )}, ref=\subscript{\alph*}]\itemsep0em
		\item\label{itfin:G_path_graph} $G$ is a path graph,
		\item\label{itfin:nof0} for every clique separator $Q$ of $G$, the $Q$-attachedness graph of $G$ has no full antipodal triangle and has no subgraph isomorphic to any of the graphs in $\mathcal{F}_0$,
		\item\label{itfin:nof0+} for every clique separator $Q$ of $G$, the $Q$-attachedness graph of $G^+$ has no subgraph isomorphic to any of the graphs in $\mathcal{F}_0$,
		\item\label{itfin:nof} for every clique separator $Q$ of $G$, the $Q$-attachedness graph of $G$ has no full antipodal triangle and has no induced subgraph isomorphic to any of the graphs in $\mathcal{F}$,
		\item\label{itfin:nof+} for every clique separator $Q$ of $G$, the $Q$-attachedness graph of $G^+$ has no induced subgraph isomorphic to any of the graphs in $\mathcal{F}$.
	\end{enumerate}
\end{theorem}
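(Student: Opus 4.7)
The overall strategy is to reduce everything to Theorem~\ref{th:characterizazion_1} combined with the analysis of the 2-coloring obstructions promised in Subsection~\ref{sub:2-coloring}. The equivalences \ref{itfin:nof0}$\Leftrightarrow$\ref{itfin:nof0+} and \ref{itfin:nof}$\Leftrightarrow$\ref{itfin:nof+} are announced as immediate consequences of Lemma~\ref{lemma:trick} (which preserves the path-graph property when passing to $G^+$) and Lemma~\ref{lemma:g+} (which absorbs full antipodal triangles into induced copies of $W_{2k+1}^{(0)}$); so the actual work is to establish the chain \ref{itfin:G_path_graph}$\Leftrightarrow$\ref{itfin:nof}$\Leftrightarrow$\ref{itfin:nof0}.

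For \ref{itfin:G_path_graph}$\Leftrightarrow$\ref{itfin:nof}, I would appeal to Theorem~\ref{th:characterizazion_1} and translate weak $Q$-colorability into the absence of induced subgraphs from $\mathcal{F}$. The partial $Q$-coloring fixes the colors of $\Upper_Q\cup\Ext_Q$ through Definition~\ref{def:partial_coloring}, so the only freedom lies in 2-coloring the antipodality subgraph restricted to each class $D\in\mathcal{D}$, subject to the boundary values imposed by~\ref{item:col_5} and~\ref{item:col_6} and respecting~\ref{item:col_3},~\ref{item:col_4},~\ref{item:col_7}. Odd cycles of antipodal edges are the classical obstruction to such a 2-coloring, and once one pins down the specific boundary configurations (an upper bound of $\Gamma_Q$ attached to every vertex of such a cycle, a $D_{i,j}$-vertex dominated by two distinct upper bounds, or both), the resulting 2-edge-colored patterns match exactly the wheels $W_{2k+1}^{(0)}$, $W_{2k+1}^{(1)}$, the fan $F_{2n+1}$, the chorded fan $\widetilde{F}_{2n+1}$, and the double fan $DF_{2n+1}$. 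I would verify each correspondence by a direct diagrammatic matching against the local cases listed in~\eqref{diagram:triples} and Lemma~\ref{lemma:elenco_trivial_2}, and conversely observe that any such induced configuration forces a contradiction in Definition~\ref{def:weak_colorability}. Full antipodal triangles are excluded separately, as noted in Lemma~\ref{lemma:nofull}.

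For \ref{itfin:nof}$\Leftrightarrow$\ref{itfin:nof0}, I would show that forbidding the graphs of $\mathcal{F}_0$ as arbitrary subgraphs is equivalent to forbidding the graphs of $\mathcal{F}$ as induced subgraphs. One direction is direct: $\widetilde{F}_{2n+1}$ and $DF_{2n+1}$ each contain, as a (not necessarily induced) subgraph, a member of $\mathcal{F}_0$, obtained by deleting the chord joining the two endpoints of the path in the chorded fan and by deleting one of the two universal vertices in the double fan; hence absence of $\mathcal{F}_0$-subgraphs rules out every induced $\mathcal{F}$-subgraph. For the other direction I would argue by minimality: starting from an $\mathcal{F}_0$-subgraph $H$ inside the $Q$-attachedness graph, I would enlarge $H$ to its induced version, and using the rigidity of the interaction between $\leq$ and $\leftrightarrow$ captured by Lemma~\ref{lemma:elenco_trivial_1} and Lemma~\ref{lemma:elenco_trivial_2}, I would show that the additional edges can only produce one of the listed configurations, i.e.\ a chord or a second universal vertex in the appropriate color, yielding an induced member of $\mathcal{F}$.

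The main obstacle is the case analysis connecting the abstract 2-coloring obstructions to the specific 2-edge-colored graphs of $\mathcal{F}$. Several sub-cases arise according to whether the ``central'' vertex of an obstruction lies in some $D_i$ or in some $D_{i,j}$, whether both endpoints of the odd cycle are attached to one or to two upper bounds, and whether the extra (non-cycle) edges are antipodal or dominance edges. Each sub-case is mechanical once one consults~\eqref{diagram:triples}, but the bookkeeping is heavy; the expected payoff is that the five graph families of $\mathcal{F}$ exhaust all the obstructions precisely because they parametrize the possible boundary data and parities in the restricted 2-coloring problem.
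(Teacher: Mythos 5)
Your overall architecture is the paper's: the $G^+$ reductions via Lemma~\ref{lemma:trick} and Lemma~\ref{lemma:g+}, a soundness/completeness analysis of the coloring obstructions through Theorem~\ref{th:characterizazion_1}, and a subgraph-versus-induced upgrade (the paper's Lemma~\ref{lemma:ind=sub}). However, your chain \ref{itfin:G_path_graph}$\Leftrightarrow$\ref{itfin:nof}$\Leftrightarrow$\ref{itfin:nof0} puts the \emph{induced} statement first, and that step overclaims. When weak $Q$-colorability fails, the 2-coloring analysis produces the wheels, fans and double fans only as \emph{subgraphs} of the attachedness graph: the boundary data (hub in $\Upper_Q$, crossing vertices pinned by~\ref{item:col_5} and~\ref{item:col_6}) say nothing about which further dominance chords run among the cycle vertices, so your ``direct diagrammatic matching'' cannot deliver induced copies of the graphs in $\mathcal{F}$. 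This is why the paper proves \ref{itfin:G_path_graph}$\Leftrightarrow$\ref{itfin:nof0} first (Lemmas~\ref{lemma:nofan} and~\ref{lemma:main}; note that the cases in Lemma~\ref{lemma:main} conclude, e.g., that $N$ contains $W^{(1)}_{2k+1}$ \emph{as a subgraph}) and only then upgrades to induced copies via Lemma~\ref{lemma:ind=sub}. Your plan does contain the upgrade, as your \ref{itfin:nof}$\Leftrightarrow$\ref{itfin:nof0} step, so all the material is present, but you must reorder the chain to \ref{itfin:G_path_graph}$\Leftrightarrow$\ref{itfin:nof0}$\Leftrightarrow$\ref{itfin:nof}: as written, your first equivalence silently depends on your second.

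Two further concrete points. First, in the soundness direction you propose to contradict Definition~\ref{def:weak_colorability} from any induced copy; but weak colorability is formulated relative to the partition $\mathcal{D}$ built from $\Upper_Q$, and an arbitrary copy of a graph in $\mathcal{F}$ need not sit compatibly with that partition---its hub need not be an upper bound, and its cycle may spread over several classes. The paper sidesteps this by contradicting \emph{strong} $Q$-colorability instead (Lemma~\ref{lemma:nofan}): by Lemma~\ref{lemma:elenco_trivial_1} the triangles through the hub are full, so~\ref{com:mw_ii} forces a parity alternation along the cycle (Claim~\eqref{lemma:scorciatoia}) that collides with~\ref{com:mw_i} at the final triangle; since strong and weak colorability are interchangeable by Theorem~\ref{th:characterizazion_1}, this is the partition-free route you should take. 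Second, the induced-upgrade direction is the technical heart of the theorem and your sketch leaves it essentially unproven: one needs the chord-propagation facts of Claim~\eqref{eq:chords}---once one $d$-chord from the hub exists, all of them do; any extra $d$-chords concentrate at $\gamma_1$ or at $\gamma_{2k}$ but not both; and an $a$-chord excludes all further $d$-chords, this last point using the no-full-antipodal-triangle hypothesis---together with the selection of a minimum-order odd cycle so that at most one $a$-chord survives. Invoking the ``rigidity'' of $\leq$ and $\leftrightarrow$ via Lemmas~\ref{lemma:elenco_trivial_1} and~\ref{lemma:elenco_trivial_2} is the right instinct, but these specific propagation statements are exactly what make ``the additional edges can only produce one of the listed configurations'' true, and they require a genuine proof rather than bookkeeping.
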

The equivalences~\ref{itfin:nof0}$\Leftrightarrow$\ref{itfin:nof0+} and~\ref{itfin:nof}$\Leftrightarrow$\ref{itfin:nof+} in the theorem above follows straightforwardly by Lemma~\ref{lemma:trick} and Lemma~\ref{lemma:g+}. The remaining implications in Theorem~\ref{cor:all} (the core of the characterization), will be the content of the next section.

\subsection{Proof of Theorem~\ref{cor:all}}
We now prove the core of Theorem~\ref{cor:all} according to the schema~\ref{itfin:G_path_graph}$\xLeftrightarrow{\text{Lemma~\ref{lemma:nofan},~\ref{lemma:main}}\,\,}$\ref{itfin:nof0}$\xLeftrightarrow{\text{Lemma~\ref{lemma:ind=sub}}\,\,}$\ref{itfin:nof}; we remember that the equivalences~\ref{itfin:nof0}$\Leftrightarrow$\ref{itfin:nof0+} and~\ref{itfin:nof}$\Leftrightarrow$\ref{itfin:nof+} are implied by Lemma~\ref{lemma:trick} and Lemma~\ref{lemma:g+}. In particular Lemma~\ref{lemma:nofan} implies that every member of $\mathcal{F}_0$ and every full antipodal triangle is an obstruction, Lemma~\ref{lemma:main} explains that $\mathcal{F}_0$ joined with a full antipodal triangle is a minimal set of obstruction and, finally, Lemma~\ref{lemma:ind=sub} shows the equivalence of containing a member of $\mathcal{F}_0$ as subgraphs and a member of $\mathcal{F}$ as induced subgraph. 

In what follows $G$ is chordal graph which is not an atom.

\begin{lemma}\label{lemma:nofan}
If $G$ is a path graph, then, for each clique separator $Q$, the $Q$-attachedness graph of $G$ has neither full antipodal triangles nor copies of any of the graphs in $\mathcal{F}_0$ as subgraphs.
\end{lemma}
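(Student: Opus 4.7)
My plan is to reduce the statement to a 2-colorability obstruction via Theorem~\ref{th:characterizazion_1}. Since $G$ is a path graph, Corollary~\ref{cor:local_mew} gives that $G$ is strong $Q$-colorable for every clique separator $Q$, and Lemma~\ref{lemma:nofull} already yields the first half: $\Gamma_Q$ contains no full antipodal triple, so in particular no full antipodal triangle. Moreover, Proposition~\ref{prop:characterization_1_if} produces a weak $Q$-coloring $f:\Gamma_Q\to[\ell+1]$, to which the forbidden-subgraph half of the lemma will be reduced.

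For the three configurations in $\mathcal{F}_0$, the strategy is to show that the presence of any of them as a subgraph of the $Q$-attachedness graph forces an odd antipodal cycle (or an odd antipodal path with pinned endpoint colors) entirely inside a single block of the partition $\mathcal{D}$ of Remark~\ref{remark:D_i_is_a_partition}, contradicting conditions~\ref{item:col_5}--\ref{item:col_7} of Definition~\ref{def:weak_colorability}. Concretely, for each $W_{2k+1}^{(i)}$ I would look at the hub: the dominance/antipodal pattern on the hub's incident edges, combined with Lemma~\ref{lemma:elenco_trivial_2}\ref{item:geq_i,j} and \ref{item:antipodal_k_k'_1}, forces the rim vertices to lie in a single $D_j$ (when the hub is or dominates an upper bound, case $W_{2k+1}^{(0)}$) or in a single $D_{i,j}$ (when the hub sits in the interior of the partition, case $W_{2k+1}^{(1)}$); the rim is then a $(2k+1)$-cycle whose edges are all antipodal, so condition~\ref{item:col_7} demands a proper 2-coloring of an odd cycle, which is impossible.

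For $F_{2n+1}$, the two end-vertices of the spine together with the apex induce a full triangle in which the spine-ends are attached to the apex by edges of opposite colors (dominance on one side, antipodality on the other). This pins the spine-ends into two distinct $D$-blocks, and by~\ref{item:col_5}--\ref{item:col_6} their colors are forced to be different prescribed values $i$ and $j$. The interior of the spine, by Lemma~\ref{lemma:elenco_trivial_2}, is confined to $D_i\cup D_j\cup D_{i,j}$, and the antipodal edges along the spine together with the dominance/antipodality edges to the apex propagate, via conditions~\ref{item:col_3}--\ref{item:col_7}, an alternating color constraint from one end of the spine to the other; the parity of $2n+1$ then makes the two forced endpoint colors incompatible.

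The main obstacle will be the bookkeeping of the specific 2-edge colorings of $W_{2k+1}^{(0)}$, $W_{2k+1}^{(1)}$ and $F_{2n+1}$ from Figure~\ref{fig:ostruzioni_indotte}: one must argue, using Lemma~\ref{lemma:elenco_trivial_1} and the diagrams in~\eqref{diagram:triples}, that the hub/apex dominance pattern truly confines the rim/spine to the claimed block(s) of $\mathcal{D}$, and that the antipodal edges lying in a common block must then be properly 2-colored by $f$. Once this localization is in place, the contradiction is the standard parity obstruction to 2-colorability of odd cycles and odd paths with prescribed endpoints, and the lemma follows.
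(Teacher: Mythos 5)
Your first half is fine and coincides with the paper's: strong $Q$-colorability plus Lemma~\ref{lemma:nofull} excludes full antipodal triples, hence full antipodal triangles. The gap is in your treatment of the graphs in $\mathcal{F}_0$, specifically in the localization step claiming that the hub's incident edges force the rim (or spine) into a single block of $\mathcal{D}$. Dominance edges in the 2-edge-colored pattern carry no orientation, so the hub $\eta$ of a copy of $W^{(0)}_{2k+1}$ may be \emph{dominated by} the rim rather than dominate it: for each rim edge $\theta_a\leftrightarrow\theta_{a+1}$ the hub must lie above both or below both endpoints (a mixed position would give $\theta_a\leq\eta\leq\theta_{a+1}$, contradicting antipodality), so $\eta$ sits above the whole rim or below it. In the latter case, with $\eta\in D_{i,j}$, every rim vertex dominates $\eta$ and therefore lies in $D_i\cup D_j\cup D_{i,j}$ --- three blocks, not one. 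The palettes imposed by~\ref{item:col_3} and~\ref{item:col_4} are then $\{i,\ell+1\}$, $\{j,\ell+1\}$ and $\{i,j\}$, and these do not fold onto two colors (any identification keeping $\{i,\ell+1\}$ and $\{i,j\}$ proper puts $j$ and $\ell+1$ in the same class, destroying properness inside $D_j$), so the ``standard parity obstruction to 2-colorability of an odd cycle'' you invoke does not apply; excluding this configuration needs a further argument and is essentially the $DF$-type situation of Lemma~\ref{lemma:main}. Two further slips: in $F_{2n+1}$ \emph{both} hub-to-end edges are antipodal (the pattern with exactly one antipodal spoke is $W^{(1)}_{2k+1}$), so your ``dominance on one side, antipodality on the other'' pinning for the fan is incorrect; and the statement forbids these graphs as \emph{subgraphs}, not induced subgraphs, so any localization must also tolerate arbitrary extra dominance chords among rim/spine vertices, which you do not address.

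For comparison, the paper's proof bypasses the weak-coloring machinery of Theorem~\ref{th:characterizazion_1} entirely and works directly with an arbitrary strong $Q$-coloring $f$. Each graph in $\mathcal{F}_0$ contains the configuration $F_0$: a vertex $\theta_0$ joined by dominance edges (in either direction) to every vertex of an antipodal path $\theta_1,\ldots,\theta_{2t}$. By Lemma~\ref{lemma:elenco_trivial_1} every triangle $\{\theta_0,\theta_i,\theta_{i+1}\}$ is full, so~\ref{com:mw_i} and~\ref{com:mw_ii} force $|f(\{\theta_0,\theta_i,\theta_{i+1}\})|=2$ and the colors alternate along the path, giving $f(\theta_1)\neq f(\theta_{2t})$ and $f(\theta_0)\in\{f(\theta_1),f(\theta_{2t})\}$; the one or two vertices of the pattern outside $F_0$ then yield a full triangle requiring three colors, contradicting~\ref{com:mw_ii}. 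That argument is insensitive to the direction of the dominance edges and to extra chords --- exactly the two points where your localization breaks. If you wish to keep your route, you must separately dispose of the hub-below-rim case (and its analogues for $F_{2n+1}$), at which point you would in effect be re-deriving the paper's alternation claim.
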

\begin{proof}
Let $Q$ be a clique separator. Let us denote by $M$ the $Q$-attachedness graph of $G$. Being $G$ a path graph, then $Q$ contains no full antipodal triangle by Lemma~\ref{lemma:nofull}. Suppose by contradiction that $M$ contains, as a subgraph, a copy $S$ of $F_{2n+1}$ or $W_{2k+1}^{(0)}$ or $W_{2k+1}^{(1)}$. In all cases, $S$ contains a subgraph $F_0$ on $\{\theta_0,\theta_1\dots,\theta_{2t}\}$, $t$ being a positive integer, fulfilling the following conditions.
	\begin{itemize}\itemsep0em
		\item[--] $\theta_i\theta_{i+1}$ in an antipodal edge of $M$, namely $\theta_i\leftrightarrow\theta_{i+1}$, for $i=1,\ldots,2t-1$;
		\item[--] $\theta_0\theta_i$ is a dominance edge of $M$, namely, either $\theta_i\leq\theta_0$ or $\theta_0\leq\theta_i$, for all $i=1,\ldots,2t$.
	\end{itemize} 
We claim that:
	\cadre\label{lemma:scorciatoia}If $f$ is any strong $Q$-coloring of $G$, then $f(\theta_1)\neq f(\theta_{2t})$ and $f(\theta_0)\in\{f(\theta_1),f(\theta_{2t})\}$.
	\endcadre
	\begin{claimproof}{\eqref{lemma:scorciatoia}}
By Lemma~\ref{lemma:elenco_trivial_1} all triangles $\{\theta_0,\theta_i,\theta_{i+1}\}$ are full, for $i=1,\ldots,2t-1$. Hence, being $f$ a strong $Q$-coloring,~\ref{com:mw_ii} implies that $|f(\{\theta_0,\theta_i,\theta_{i+1}\})|=2$, for $i=1,\ldots,2t-1$. Thus if $f(\theta_0)=f(\theta_1)$, then $f(\theta_2)\neq f(\theta_0)$, $f(\theta_3)=f(\theta_0),\dots, f(\theta_{2t})\neq f(\theta_0)$. Instead, if $f(\theta_0)\neq f(\theta_1)$, then $f(\theta_2)= f(\theta_0)$, $f(\theta_3)\neq f(\theta_0),\ldots, f(\theta_{2t})=f(\theta_0)$. In both cases, the thesis follows. \hfill\underline{End proof of \eqref{lemma:scorciatoia}}
\end{claimproof}

We now use Claim \eqref{lemma:scorciatoia} to prove a contradiction to the strong $Q$-colorability of $G$. Suppose first that $S\cong F_{2n+1}$, for some $n$, then let $V(S)=\{\eta,\gamma_1,\dots,\gamma_{2n}\}$ where $\eta$ is the maximum degree vertex of $S$. Let $F'$ be the subgraph induced by $V(S)=\{\eta,\gamma_2,\dots,\gamma_{2n-1}\}$. Hence $F'\cong F_0$. By Claim \eqref{lemma:scorciatoia}, $\gamma_2$ and $\gamma_{2n-1}$  have opposite colors and $f(\gamma_{\eta})\in\{f(\gamma_2),f(\gamma_{2n-1})\}$. Moreover, the triangles induced by $\{\eta,\gamma_1,\gamma_2\}$ and $\{\eta,\gamma_{2n-1},\gamma_{2n}\}$ are both full by Lemma~\ref{lemma:elenco_trivial_1} and at least one of them cannot be 2-colored under $f$. 
	
	Suppose next that $S\cong W_{2k+1}^{(0)}$ or $S\cong W_{2k+1}^{(1)}$ for some $k$. Let $V(S)=\{\eta,\gamma_1,\dots,\gamma_{2k+1}\}$ where $\eta$ is still the maximum degree vertex of $S$ (if $S\cong W_{2k+1}^{(1)}$, then let $\gamma_1$ be the only vertex such that $\gamma_1\eta$ is an antipodal edge) and let $F''$ be the subgraph induced by $V(S)=\{\eta,\gamma_1,\dots,\gamma_{2k}\}$. Clearly, $F''\cong F_0$, as well. By Claim \eqref{lemma:scorciatoia}, $\gamma_1$ and $\gamma_{2k}$  have opposite colors and $f(\eta)\in\{f(\gamma_1),f(\gamma_{2k})\}$. It holds that $f(\gamma_{2k+1})\not\in\{f(\gamma_{2k}),f(\gamma_1)\}$	
 because $\gamma_{2k+1}\leftrightarrow\gamma_{2k}$ and $\gamma_{2k+1}\leftrightarrow\gamma_1$. Moreover, the triangles induced by $\{\eta,\gamma_1,\gamma_{2k+1}\}$ and $\{\eta,\gamma_{2k},\gamma_{2k+1}\}$ are both full by Lemma~\ref{lemma:elenco_trivial_1} and at least one of them cannot be 2-colored under $f$. In any case a contradiction to the strong $Q$-colorability of is achieved.
\end{proof}

\begin{lemma}\label{lemma:main}
If for each clique separator $Q$, the $Q$-attachedness graph of $G$ has neither full antipodal triangles nor copies of any of the graphs in $\mathcal{F}_0$ as subgraphs, then $G$ is path graph.
\end{lemma}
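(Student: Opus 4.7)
The plan is to prove the contrapositive. Suppose $G$ is not a path graph; by Theorem~\ref{th:characterizazion_1}, there is a clique separator $Q$ such that either $\Gamma_Q$ contains a full antipodal triple or $G$ is not weak $Q$-colorable. In the first case, the $Q$-attachedness graph of $G$ contains a full antipodal triangle and we are done, so assume from now on that $\Gamma_Q$ has no full antipodal triple. By Remark~\ref{remark:D_i_is_a_partition}, the sets $D_i$ and $D_{i,j}$ then form a partition of $\Gamma_Q$, and we must show that failure of weak $Q$-colorability produces a subgraph isomorphic to a member of $\mathcal{F}_0$ inside the $Q$-attachedness graph.

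I would split the failure of weak $Q$-colorability into two cases. First, suppose the partial $Q$-coloring on $\Upper_Q\cup\Ext_Q$ cannot be defined; as observed right after Definition~\ref{def:partial_coloring}, this happens precisely when there exist $\gamma\in D_{i,j}$, $\gamma'\in D_i$ and $\gamma''\in D_j$ with $\gamma\leftrightarrow\gamma'$ and $\gamma\leftrightarrow\gamma''$. Using Lemma~\ref{item:antipodal_D_i_Upper}, the fact that upper bounds cannot dominate one another, and Lemma~\ref{lemma:elenco_trivial_1}, one deduces all the colored edges between the vertices $\{u_i,u_j,\gamma,\gamma',\gamma''\}$ (e.g. $u_i\leftrightarrow u_j$, $u_i\leftrightarrow\gamma''$, $u_j\leftrightarrow\gamma'$, $\gamma\leq u_i$, $\gamma\leq u_j$, $\gamma'\leq u_i$, $\gamma''\leq u_j$), and sees that this configuration contains a copy of $F_5$ as a subgraph of the $Q$-attachedness graph.

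Second, suppose the partial $Q$-coloring does exist but cannot be extended to a weak $Q$-coloring. Then condition~\ref{item:col_7} fails for some $D\in\mathcal{D}$: the antipodality subgraph restricted to $D$, subject to the forced colors at its boundary vertices in $\Ext_Q$, admits no proper 2-coloring. The standard obstruction is an odd closed antipodal walk in $D$, or an odd antipodal path between two boundary vertices whose forced colors coincide. I would translate each such obstruction into a member of $\mathcal{F}_0$ as follows: every element of $D_i$ is $\leq$-dominated by $u_i$, and every element of $D_{i,j}$ is dominated by both $u_i$ and $u_j$; hence an odd antipodal cycle inside $D_i$ of length $2k+1$, together with the hub $u_i$ (connected to every outer vertex by a dominance edge, by Lemma~\ref{item:neighboring_leq}), yields a copy of $W_{2k+1}^{(0)}$. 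An odd antipodal cycle inside $D_{i,j}$, together with $u_i$ (or $u_j$) as hub, similarly yields $W_{2k+1}^{(0)}$. Finally, an odd antipodal path inside $D_{i,j}$ whose two endpoints have forced colors forcing a conflict gives (via the path plus the appropriate upper bound as hub, and possibly one extra vertex from a neighboring $D_i$ or $D_j$) either $F_{2n+1}$ or $W_{2k+1}^{(1)}$, depending on whether the conflict is produced from outside $D_{i,j}$ by condition~\ref{item:col_6} or from a chord through an antipodal edge.

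The hardest part will be this second sub-case: one must systematically enumerate the kinds of parity obstructions to 2-coloring the antipodality subgraph on each $D$, and match each one to exactly one of $W_{2k+1}^{(0)}$, $W_{2k+1}^{(1)}$, or $F_{2n+1}$. The argument is conceptually clean because $u_i$ and $u_j$ always supply the needed dominance spokes, but it requires patient bookkeeping to check that the edge-color pattern around the hub matches the prescribed figure, and to handle correctly the case in which the conflicting endpoints of an odd antipodal path lie on the boundary $\Ext_Q$ rather than inside $D$.
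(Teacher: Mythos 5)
Your overall strategy coincides with the paper's own proof: you argue the contrapositive through Theorem~\ref{th:characterizazion_1}, split the failure into a full antipodal triple, the non-existence of a partial $Q$-coloring (condition~\ref{item:col_6} unsatisfiable), and the non-extendability of the partial coloring (condition~\ref{item:col_7} failing on some $D\in\mathcal{D}$), and then convert each 2-coloring obstruction into a colored wheel or fan using upper bounds as hubs. Your handling of the no-partial-coloring case is correct and in fact slightly tidier than the paper's: where the paper splits on whether $\gamma'\leftrightarrow\gamma''$ and exhibits an induced $W^{(1)}_3$ or an induced $DF_5$, you observe that the forced edges ($u_i\leftrightarrow u_j$, $\gamma'\leftrightarrow u_j$, $\gamma''\leftrightarrow u_i$ via Lemma~\ref{item:antipodal_D_i_Upper}, plus the dominance edges) already contain $F_5$ as a subgraph, with hub $u_i$ and antipodal path $u_j,\gamma',\gamma,\gamma''$ --- which suffices because the statement only requires subgraph, not induced subgraph, containment.

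There is, however, a genuine gap in your second case: your enumeration of the obstructions to extending the partial coloring is incomplete. You treat odd antipodal cycles in $D_i$ and in $D_{i,j}$, and conflicting paths only \emph{inside $D_{i,j}$}; but a conflict can equally occur inside $D_i$: every vertex of $\Ext_Q\cap D_i$ is forced to color $i$ by~\ref{item:col_5}, so an antipodal path with an odd number of edges between two such vertices makes~\ref{item:col_7} unsatisfiable even when $H_{D_i}$ is bipartite. This is not subsumed by your cycle case, since the witnesses certifying membership in $\Ext_Q$ lie outside $D_i$ and no odd cycle within $D_i$ need exist. The paper treats this case first (its ``$D=D_i$'' subcase): with witnesses $\gamma,\gamma'\notin D_i$ antipodal to the path's ends, Lemma~\ref{item:antipodal_D_i_Upper} gives $\gamma\leftrightarrow u_i$ and $\gamma'\leftrightarrow u_i$, and one obtains $W^{(1)}_{2k+1}$ when $\gamma=\gamma'$ and $F_{2n+1}$ or $\widetilde{F}_{2n+1}$ (harmless, since $\widetilde{F}_{2n+1}$ contains $F_{2n+1}$ as a subgraph) when $\gamma\neq\gamma'$. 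Relatedly, your stated dichotomy for obtaining $F_{2n+1}$ versus $W^{(1)}_{2k+1}$ (``conflict produced from outside by~\ref{item:col_6} or from a chord through an antipodal edge'') misidentifies the actual mechanism: the relevant dichotomy is whether the two outside witnesses coincide (closing the path into an odd antipodal cycle with exactly one antipodal spoke, giving $W^{(1)}_{2k+1}$) or are distinct (giving $F_{2n+1}$ or $\widetilde{F}_{2n+1}$); and in the differently-forced-endpoints case in $D_{i,j}$ you need two witnesses \emph{and both} $u_i$ and $u_j$ (the paper's $DF_{2n+1}$ configuration), not ``possibly one extra vertex'' --- though that configuration does contain $F_{2n+1}$ as a subgraph, so your claimed output is consistent. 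The missing $D_i$ path case is repaired by exactly the argument you sketch for $D_{i,j}$, so the flaw is one of enumeration rather than of method; but as written your proof does not cover all failure modes of condition~\ref{item:col_7}.
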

\begin{proof}
By Corollary~\ref{cor:local_mew}, $G$ is a path graph if and only if $G$ is strong $Q$-colorable for each clique separator $Q$. We prove the contrapositive statement, namely, if $G$ is not strong $Q$-colorable for some clique separator $Q$, then the $Q$-attachedness graph $M$ of $G$ contains full antipodal triangles or some copy of a graph of $\mathcal{F}_0$ as subgraphs. Since each graph in $\mathcal{F}$ contains some graph of $\mathcal{F}_0$ as subgraph, we show the statement with $\mathcal{F}$ in place of $\mathcal{F}_0$. Denote by $H$ the $Q$-antipodality graph and remember that $\mathcal{D}$ is a partition of elements of $\Gamma_Q$ if there are not full antipodal triple. For $D\in \mathcal{D}$ denote by $H_D$ the subgraph of $H$ induced by $D$. 

By Theorem~\ref{th:characterizazion_1}, $G$ is not a path graph if one of the following apply: $Q$ contains a full antipodal triple, $Q$ does not admits a partial $Q$-coloring, the $Q$ partial coloring defined on $\Upper_Q\cup\Ext_Q$ can not be extended to a weak $Q$-coloring on $\Gamma_Q$. Let $(u_1,\ldots,u_\ell)$ be any ordering of $\Upper_Q$.

If $Q$ contains a full antipodal triple then this full antipodal triple is also a full antipodal triangle. If $Q$ does not admit a partial $Q$-coloring, then~\ref{item:col_6} is not be satisfiable; indeed~\ref{item:col_2} and~\ref{item:col_5} are always satisfiable. Hence there exist $\gamma\in D_{i,j}$, $\gamma'\in D_i$ and $\gamma''\in D_j$, for some $i<j\in[\ell]$, such that $\gamma\leftrightarrow\gamma'$ and $\gamma\leftrightarrow\gamma''$. Now there are two cases: $\gamma'\leftrightarrow\gamma''$ or $\gamma'\not\leftrightarrow\gamma''$. If the first case applies, then $\{\gamma,\gamma',\gamma'',u_i\}$ induces a copy of $W^{(1)}_3$ in $M$ (refer to Lemma~\ref{lemma:elenco_trivial_1} and Lemma~\ref{lemma:elenco_trivial_2} to determine all colored edges in $M$). Else, the second case applies and $\{\gamma,\gamma',\gamma'',u_i,u_j\}$ induces a copy of $DF_5$.

Thus it remains to study only the case in which $Q$ has no full antipodal triangle, $Q$ admits a partial $Q$-coloring $g:\Upper_Q\cup\Ext_Q\to[\ell]$ and $g$ can not be extended to a weak $Q$-coloring on $\Gamma_Q$. Being~\ref{item:col_3} and~\ref{item:col_4} be always satisfiable by an extension of $g$ (as proved in Lemma~\ref{lemma:strong_is_partial}), then there exists $D\in\mathcal{D}$ such that every extension of $g$ does not satifies~\ref{item:col_7} on $D$. 

Conditions~\ref{item:col_3},~\ref{item:col_4} and~\ref{item:col_7} implies that $H_D$ is 2-colored. Only three cases can occur:
	\begin{itemize}\itemsep0em
		\item[--]  $H_D$ is non bipartite. In this case no 2-coloring $g$ of $H$ exists.
		\item[--] $H_D$ is bipartite but it contains a path $P$ with an even number of vertices whose endvertices have the same color under $g$. 
		\item[--] $H_D$ is bipartite but it contains a path $P$ with an odd number of vertices whose endvertices have different color under $g$. 
	\end{itemize}

In the first case, $H_D$ contains an odd cycle $C$, on $2k+1$ vertices, say, as subgraph. Hence, for $u\in \Upper_Q\cap D$, the subgraph induced by $C\cup \{u\}$ in $H$ contains a copy of $W_{2k+1}^{(0)}$ as a subgraph.
	
In the second case let $\Theta=\{\theta_1,\ldots,\theta_{2k}\}$ be the set of vertices of $P$. Suppose first that $D=D_i$ for some $i\in [\ell]$. By definition of $g$ there are $\gamma,\gamma'\not\in D_i$ such that $\gamma\leftrightarrow\theta_1$ and $\gamma'\leftrightarrow\theta_{2k}$. It holds that $\gamma\leftrightarrow u_i$ and $\gamma'\leftrightarrow u_i$ by the transitivity of $\leq$ and the definition of $D_i$. Now, let $N$ be the subgraph induced by $\Theta\cup\{\gamma,\gamma',u_i\}$. If $\gamma=\gamma'$ then $N$ contains $W^{(1)}_{2k+1}$ as subgraph. If $\gamma\neq\gamma'$, then $N$ contains either $F_{2n+1}$ or $\widetilde{F}_{2n+1}$ according to whether $\gamma\leftrightarrow\gamma'$ or not. 
	If $D=D_{i,j}$, then we obtain the same results with a similar reasoning.
	
The third case can apply only to $D=D_{i,j}$ for some $i,\,j\in [\ell]$, because all the elements of $\Ext_Q\cap D_i$ have the same color $i$ under $g$. Let $\Theta=\{\theta_1,\ldots,\theta_{2k+1}\}$ be the set of vertices of $P$. By the definition of $g$ there are $\gamma\in D_i$ and $\gamma'\in D_j$ such that $\gamma\leftrightarrow\theta_1$ and $\gamma'\leftrightarrow\theta_{2k+1}$. Then $\Theta\cup\{\gamma,\gamma',u_i,u_j\}$ induces a subgraph in $H_{i,j}$ that contains $DF_{2n+1}$ as subgraph.
\end{proof}

\begin{lemma}\label{lemma:ind=sub}
Let $Q$ be a clique separator of $G$. If the $Q$-attachedness graph of $G$ has no full antipodal triangle, then it has a copy of a graph in $\mathcal{F}_0$ as a subgraph if and only if it has a copy of a graph in $\mathcal{F}$ as an induced subgraph.
\end{lemma}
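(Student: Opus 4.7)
The plan is to split the biconditional into its two directions, relying on $\mathcal{F}_0\subseteq\mathcal{F}$ for the easier one and using a minimality argument for the other.

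For the direction ``induced copy of a graph in $\mathcal{F}\Rightarrow$ subgraph copy of a graph in $\mathcal{F}_0$'', I would first note that $\mathcal{F}_0\subseteq \mathcal{F}$, so whenever the induced copy belongs to $\mathcal{F}_0$ itself there is nothing to do. The remaining case amounts to inspecting the two graphs $\widetilde{F}_{2n+1}$ and $DF_{2n+1}$ in Figure~\ref{fig:ostruzioni_indotte} and exhibiting, inside each one, a color-preserving subgraph copy of some $F_{2m+1}$ or $W^{(j)}_{2k+1}$: deleting the extra chord of $\widetilde{F}_{2n+1}$ yields $F_{2n+1}$, and deleting one of the two apices of $DF_{2n+1}$ (together with its incident edges) yields a fan $F_{2n+1}$ on the remaining vertices. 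This is a direct check from the figure.

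For the converse ``subgraph copy of a graph in $\mathcal{F}_0\Rightarrow$ induced copy of a graph in $\mathcal{F}$'', I would fix a color-preserving subgraph copy $S$ of some $Y\in\mathcal{F}_0$ in the $Q$-attachedness graph $M$ of $G$, chosen so that $|V(S)|$ is minimum among all such copies (across all three families in $\mathcal{F}_0$). Put $M'=M[V(S)]$. If $M'=S$ we are already done, since $Y\in\mathcal{F}_0\subseteq \mathcal{F}$. Otherwise there exist chords, i.e.\ edges of $M'$ that are not edges of $S$. The absence of full antipodal triangles combined with Lemma~\ref{lemma:elenco_trivial_1}(c) (applied to cases $(ii)$--$(v)$ of~\eqref{diagram:triples}) forces every triangle of $M'$ that contains at least one dominance edge to be a full triangle, which in particular severely constrains the allowed color of each chord relative to the spokes of $S$.

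Now I would run a case analysis on the isomorphism type of $Y$ and the color and location of the chords. When $Y\cong F_{2n+1}$ with center $\eta$ and path $\gamma_1\leftrightarrow\cdots\leftrightarrow\gamma_{2n}$, a dominance chord would give, via the transitivity of $\leq$ applied together with the dominance spokes to $\eta$, either a shorter fan (contradicting minimality) or precisely the pattern $\widetilde{F}_{2m+1}$; an antipodal chord $\gamma_i\leftrightarrow\gamma_j$ either allows the path to be shortcut through $\gamma_i\gamma_j$ to produce a smaller $F_{2m+1}$ (contradiction), or the chord together with the dominance spokes exhibits an induced $\widetilde{F}_{2n+1}$ or $DF_{2n+1}$, once one checks parities. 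When $Y\cong W^{(0)}_{2k+1}$ or $Y\cong W^{(1)}_{2k+1}$ the same chord-by-chord inspection on the rim either produces a smaller wheel or fan, contradicting minimality, or exhibits an induced copy of $\widetilde{F}_{2n+1}$ or $DF_{2n+1}$ inside $M'$.

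The main obstacle will be the bookkeeping in the case analysis: one must verify that whenever a chord is used to shortcut a path or rim, the two halves have the correct parity so that a shortened member of $\mathcal{F}_0$ actually arises (rather than, say, an even-length object not in the list), and one must confirm that in the remaining configurations the induced subgraph on $V(S)$ matches exactly one of the 2-edge-colored templates $\widetilde{F}_{2n+1}$ or $DF_{2n+1}$ in Figure~\ref{fig:ostruzioni_indotte}, with the correct placement of antipodal and dominance edges. Once those parity and color checks are settled, each branch either contradicts the minimality of $S$ or delivers the desired induced copy of a member of $\mathcal{F}$.
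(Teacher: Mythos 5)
Your overall strategy for the hard direction (fix a minimal configuration, then classify the chords of $M[V(S)]$ using the absence of full antipodal triangles) is in the right spirit, but the proposal leaves out exactly the step that carries the whole proof. The paper does not argue chord-by-chord on a minimum copy of a member of $\mathcal{F}_0$: it fixes \emph{any} copy $S$, extracts the odd antipodal cycle $R$ inside $S$, takes a \emph{minimum odd cycle $C$ of length at least $5$ in the antipodality graph induced on $V(R)$} (so that $C$ is an antipodal hole, or has exactly one $a$-chord lying in a triangle), and then proves Claim~\eqref{eq:chords}: a single $d$-chord $\gamma_0\gamma_j$ with $\gamma_j\le\gamma_0$ propagates around $C$ (via Lemma~\ref{lemma:elenco_trivial_1} and antisymmetry of $\le$) to force \emph{all} $d$-chords $\gamma_0\gamma_l$ with uniform orientation; any further $d$-chords must all emanate from one of the two cycle-neighbours $\gamma_1,\gamma_{2k}$ of $\gamma_0$, and cannot coexist with the $a$-chord. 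This propagation is what guarantees that $V(C)$ (or $V(C)\cup\{\gamma_0\}$) induces \emph{exactly} a wheel, fan, chorded fan or double fan. In your sketch this claim is replaced by the assertion that fullness of mixed triangles ``severely constrains the allowed color of each chord,'' which is not a proof: a single dominance chord by itself yields neither a shorter fan nor $\widetilde{F}_{2m+1}$; one must first show it forces a complete, uniformly oriented star of $d$-chords. Moreover two of your branch outcomes are misassigned: the extra chord of $\widetilde{F}_{2n+1}$ is \emph{antipodal}, so a dominance chord can never produce $\widetilde{F}_{2m+1}$ (dominance chords lead, after propagation, to $DF_{2m+1}$, while the single antipodal chord gives $\widetilde{F}_{2m+1}$ or a shorter odd cycle contradicting minimality). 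Without the analogue of Claim~\eqref{eq:chords}, the statement ``each branch either contradicts minimality or delivers the desired induced copy'' is unestablished, and this is the mathematical core of the lemma.

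Two smaller points. First, your check in the easy direction for $DF_{2n+1}$ is wrong as written: in the $2$-edge-colored template both apices of $DF_{2n+1}$ lie \emph{on} the odd antipodal cycle, so deleting an apex destroys that cycle and leaves only $2n$ vertices, which cannot carry $F_{2n+1}$; the correct observation is that deleting the dominance chords incident to one of the two hubs (an edge deletion, not a vertex deletion) leaves a spanning subgraph copy of $F_{2n+1}$. Second, your global minimality over all copies of members of $\mathcal{F}_0$ is a genuinely different (and defensible) organizing device from the paper's local minimality of the cycle $C$ inside a fixed copy, and it does let you conclude that any induced template must occupy all of $V(S)$ (an induced template on a proper subset would contain a smaller $\mathcal{F}_0$-subgraph, since $\widetilde{F}_{2n+1}$ and $DF_{2n+1}$ both contain $F_{2n+1}$ as spanning subgraphs); but this bookkeeping advantage does not substitute for the missing chord-propagation argument, which your proposal would still have to prove in full.
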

\begin{proof}
Since any graph in $\mathcal{F}_0$ is contained as subgraph in one of the graph in $\mathcal{F}$ one direction is trivial. Let us prove the other direction. Let $H$ and $M$ be the $Q$-antipodality  and $Q$-attachedness graph of $G$. We have to prove that if $M$ contains some copy of a graph of $\mathcal{F}_0$, then $M$ contains an induced copy of some graph of $\mathcal{F}$. Let $S$ be a graph of $\mathcal{F}_0$. For a cycle $C$ of $S$ it is convenient to distinguish between chords that are edges of the antipodality graphs, which we call $a$-chords, from those that are edges of the dominance graph, which we call $d$-chords. 

Let now $C$ be a antipodal odd cycle of $S$ on $2k+1$ vertices for some integer $k\geq2$, i.e., the vertex set of $C$ is $\{\gamma_0,\ldots,\gamma_{2k}\}$ and the edges are $\{\gamma_0\gamma_1,\ldots,\gamma_{2k-1}\gamma_{2k},\gamma_0\gamma_{2k}\}$, where all the edges of $C$ are antipodal edges. Suppose that $C$ has either no $a$-chord, namely $C$ is induced in $H$, or $C$ has precisely the $a$-chord $\gamma_1\gamma_{2k}$. We will show that every graph in $\mathcal{F}_0$ contains such a cycle with possible $d$-chords with an end in $\gamma_0$. The following fact about such a $C$ is crucial to prove the lemma and it implies that if $C$ has at least one $d$-cords with an end in $\gamma_0$, then $C$ induces in $M$ a copy of $F_{2k+1}$, $DF_{2k+1}$ or $\widetilde{F}_{2k+1}$.
\cadre\label{eq:chords}
If $\gamma_0\gamma_j$ is a $d$-chord of $C$ with, say, $\gamma_j\leq \gamma_0$, $j\not\in\{1,2k\}$, then $C$ has $d$-chords $\gamma_0\gamma_l$ with $\gamma_l\leq\gamma_0$, for all $l\not\in\{1,2k\}$. Moreover,
\begin{itemize}\itemsep0em
\item if $C$ is induced in $H$ and $C$ has some other $d$-chord, then $C$ possesses either all $d$-chords $\gamma_1\gamma_j$ with $\gamma_j\leq \gamma_1$, $j\not \in \{0,2\}$, or, symmetrically, all the $d$-chords $\gamma_{2k}\gamma_j$, with $\gamma_j\leq \gamma_{2k}$, $j\not\in\{0,2k-1\}$,
\item if $\gamma_1\gamma_q$ is an $a$-chord of $C$, then $C$ has no other $d$-chords.  
\end{itemize}
\endcadre
\begin{claimproof}{\eqref{eq:chords}} In the first place, observe that $\gamma_{j-1}\leftrightarrow \gamma_j$ and $\gamma_{j+1}\leftrightarrow \gamma_j$ trivially imply $\gamma_{j-1}\Join\gamma_j$ and $\gamma_{j+1}\Join \gamma_j$ hence, by Lemma~\ref{lemma:elenco_trivial_1}, it holds that  $\gamma_0\Join\gamma_{j-1}$ and $\gamma_0\Join\gamma_{j+1}$. Thus $\gamma_0\gamma_{j-1}$ and $\gamma_0\gamma_{j+1}$ are $d$-chords of $C$, because the unique possible $a$-chord is $\gamma_1\gamma_{2k}$. Necessarily $\gamma_{j-1}\leq \gamma_0$ for, if not, then $\gamma_j\leq \gamma_0\leq \gamma_{j-1}$ would imply $\gamma_j\leq \gamma_{j-1}$ contradicting that  $\gamma_{j-1}\leftrightarrow\gamma_j$. By the same reasons, $\gamma_{j+1}\leq \gamma_0$. A repeated application of this argument to $j-1$ and $j+1$ in place of $j$ proves the first part of the claim (see Figure~\ref{fig:eptagons}(\subref{fig:eptagons_a})). 

The first part of the claim is clearly invariant under automorphisms of $C$. Consequently, we deduce that if $C$ has another $d$-chord  $\gamma_h\gamma_\ell$ with $\gamma_\ell\leq \gamma_h$ and $h\not\in\{1,2k\}$, then $C$ has also $d$-chords $\gamma_h\gamma_1$ and $\gamma_h\gamma_{2k}$. But this is impossible because it would imply $\gamma_{2k}\leq \gamma_h\leq \gamma_0$ while we know that $\gamma_0\leftrightarrow\gamma_{2k}$. Hence all the other possible $d$-chords of $C$ have one end in $\{\gamma_1,\gamma_{2k}\}$. On the other hand $C$ cannot possess $d$-chords $\gamma_1\gamma_h$ and $\gamma_{2k}\gamma_\ell$ for some $h,\,\ell\in [2k]$  because, by the first part of the claim, it would possess the $d$-chord $\gamma_1\gamma_{2k}$ and this would imply $\gamma_{2k}\leq \gamma_1$ and $\gamma_1\leq \gamma_{2k}$ and consequently the contradiction $\gamma_1=\gamma_{2k}$ (see Figure~\ref{fig:eptagons}(\subref{fig:eptagons_b}) and Figure~\ref{fig:eptagons}(\subref{fig:eptagons_c})). 
	
It remains to prove that if $\gamma_1\gamma_{2k}$ is an $a$-chord of $C$, then $C$ has no other $d$-chords with one end in $\{\gamma_1,\gamma_{2k}\}$ (hence no other $d$-chords at all, as Figure~\ref{fig:eptagons}(\subref{fig:eptagons_d})) shows). Suppose that $C$ has a $d$-chord with one end in $\{\gamma_1,\gamma_{2k}\}$, $\gamma_1$ say. Then $C$ has the $d$-chord $\gamma_1\gamma_{2k-1}$ by above. Since $\gamma_{2k-1}\leq\gamma_0$, $\gamma_{2k-1}\leq \gamma_1$ and $\gamma_{2k-1}\leftrightarrow \gamma_{2k}$, by Lemma~\ref{lemma:elenco_trivial_1} it follows that  $\{\gamma_0,\gamma_1,\gamma_{2k}\}$ induces a full antipodal triangle in $M$, contradicting that $M$ has no such triangle.\hfill\underline{End proof of \eqref{eq:chords}}  
\end{claimproof}

\begin{figure}[h]
\captionsetup[subfigure]{justification=centering}
\centering
\quad
%FIGURA 1
	\begin{subfigure}{2.5cm}
\begin{overpic}[width=2.5cm,percent]{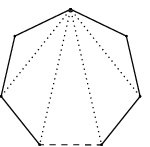}
\put(45,102){$\gamma_0$}
\put(-11,79){$\gamma_{2k}$}
\put(88,79){$\gamma_1$}

\put(100,35){$\gamma_2$}
\put(77,0){$\gamma_3$}
\put(-34,35){$\gamma_{2k-1}$}
\put(-9,0){$\gamma_{2k-2}$}
\end{overpic}
\caption{}\label{fig:eptagons_a}
\end{subfigure}
\qquad\qquad
%FIGURA 2
	\begin{subfigure}{2.5cm}
\begin{overpic}[width=2.5cm,percent]{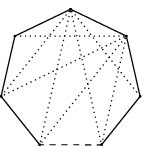}
\put(45,102){$\gamma_0$}
\put(-11,79){$\gamma_{2k}$}
\put(88,79){$\gamma_1$}

\put(100,35){$\gamma_2$}
\put(77,0){$\gamma_3$}
\put(-34,35){$\gamma_{2k-1}$}
\put(-9,0){$\gamma_{2k-2}$}
\end{overpic}
\caption{}\label{fig:eptagons_b}
\end{subfigure}
\qquad\qquad
%FIGURA 3
	\begin{subfigure}{2.5cm}
\begin{overpic}[width=2.5cm,percent]{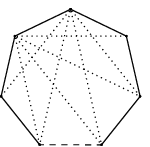}
\put(45,102){$\gamma_0$}
\put(-11,79){$\gamma_{2k}$}
\put(88,79){$\gamma_1$}

\put(100,35){$\gamma_2$}
\put(77,0){$\gamma_3$}
\put(-34,35){$\gamma_{2k-1}$}
\put(-9,0){$\gamma_{2k-2}$}
\end{overpic}
\caption{}\label{fig:eptagons_c}
\end{subfigure}
\qquad\qquad
%FIGURA 4
	\begin{subfigure}{2.5cm}
\begin{overpic}[width=2.5cm,percent]{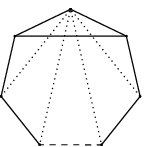}
\put(45,102){$\gamma_0$}
\put(-11,79){$\gamma_{2k}$}
\put(88,79){$\gamma_1$}

\put(100,35){$\gamma_2$}
\put(77,0){$\gamma_3$}
\put(-34,35){$\gamma_{2k-1}$}
\put(-9,0){$\gamma_{2k-2}$}
\end{overpic}
\caption{}\label{fig:eptagons_d}
\end{subfigure}
\caption{graphs in the proof of Claim~\ref{eq:chords}. Note that the graph in (\subref{fig:eptagons_a}) is isomorphic to $F_{2k+1}$, while the graphs in (\subref{fig:eptagons_b}) and (\subref{fig:eptagons_c}) are isomorphic to $DF_{2k+1}$, and the graph in (\subref{fig:eptagons_d}) is isomorphic to $\widetilde{F}_{2k+1}$.}\label{fig:eptagons}
\end{figure}
We can now complete the proof of the lemma. Let $S$ be a copy in $M$ of any of the three graphs in $\mathcal{F}_0$, and let $S$ have $n$ vertices $\gamma_0,\gamma_1,\ldots,\gamma_{n-1}$. Observe that $S$ possesses an odd cycle $R$ on at least $n-1$ vertices; more precisely, the wheels have an odd cycle on $n-1$ vertices and the fan on $n$ vertices. Let $\gamma_0$\ be the highest degree vertex in $S$ and let $H_R$ and $M_R$ be the graphs induced by $R$ in $H$ and $M$, respectively. Let $C$ be a cycle with minimum possible order among the odd cycles of order at least 5 contained in $H_R$. Hence either $C$ is an odd hole of $H$ or $C$ is an odd cycle of $H$ with exactly one $a$-chord which belongs to a triangle having the other two edges on $C$, otherwise the minimality is denied. Clearly, the dominance edges of $S$  induced by $V(C)$ are $d$-chords of $C$. Suppose first that $C$ has no extra $d$-chord other than those. In this case we are done because, $V(C)\cup \{\gamma_0\}$ (possibly $\theta\in V(C)$ when $S$ is a fan) induces either a wheel or a fan or a chorded fan. We may therefore assume that $C$ possesses some extra $d$-chord (a dominance edge of $M_R$ which is not in $S$). Possibly after relabeling, $C$ is of the form  described in Claim~\eqref{eq:chords} and $C$ possesses all the $d$-chords $\gamma_0\gamma_i$, $i\in [n-1]$ (by the claim). If $C$ possesses no other $d$-chords we are done, because $V(C)$ induces either a chorded fan or a fan according to whether or not $C$ possesses the unique $a$-chord $\gamma_1\gamma_t$. If $C$ possesses some other $d$-chord, still by Claim~\eqref{eq:chords}, then $C$ possesses either all $d$-chords $\gamma_1\gamma_j$ with $\gamma_j\leq \gamma_1$, $j\not \in \{0,2\}$, or all the $d$-chords $\gamma_t\gamma_j$, with $\gamma_j\leq \gamma_t$, $j\not\in\{0,t-1\}$. In this case $V(C)$ induces  a double fan in $M$. The proof is completed.
\end{proof}

%The\todob{eliminare frase???} proof of Theorem~\ref{cor:all} is thus completed. Just a final remark: the proof of the equivalence between statements~\ref{itfin:G_path_graph} and~\ref{itfin:nof} obtained via Lemma~\ref{lemma:ind=sub}, may look somehow artificial. This because we have chosen to prove Theorem~\ref{cor:all} by exploiting the characterization of path graph through partial $Q$-colorability (Statement~~\ref{item:st_weak} in Theorem~\ref{th:characterizazion_1}). %Although technically more difficult, it can be proved that the graphs in $\mathcal{F}$ are precisely the induced subgraphs that obstruct the properties expressed in the equivalent Statement~\ref{item:st_loc_part}.

\subsection{Comparison with L\'{e}v\^{e}que, Maffray,~and~Preissmann's characterization}\label{sub:LMP}

We give a brief comparison of our characterization with L\'{e}v\^{e}que, Maffray,~and~Preissmann's characterization~\cite{bfm}, whose list of minimal forbidden subgraphs of the input graph is given in Figure~\ref{fig:lmp}. Table~\ref{table:syn} gives a kind of dictionary between the two characterizations. The table reads as follows. For each row of the table, if a chordal graph $G$ contains an induced copy of one of the subgraphs in the leftmost column (according to L\'{e}v\^{e}que, Maffray,~and~Preissmann's characterization), then each of the graphs in the rightmost column occurs as an induced copy in the $Q$-attachedness graph of $G^+$ for some clique separator $Q$ (according to our characterization). From the table it is apparent a sort of coarsening of the obstructions.

We do not prove how we obtain Table~\ref{table:syn} because it is only a fact of checking, but we report few observations. First of all, it is not necessary to build graph $G^+$ but it suffices to build the attacchedness graph of $G$, for $G$ equal to every obstruction, and observe that a full antipodal triangle corresponds to $W^{(0)}_3$ in the attacchedness graph of $G^+$ (see Lemma~\ref{lemma:g+}'s proof). Obstructions $F_i$ for $i\in\{1,2,3,4,6,7,13,14,15\}$ have exactly one clique separator and thus there is one to one correspondence between L\'{e}v\^{e}que, Maffray,~and~Preissmann's obstructions and ours. Obstructions $F_j$ for $j\in\{8,9,11,16\}$ have exactly two clique separators that are symmetric, thus they generate the same obstruction in $\mathcal{F}$. The same applies on obstructions $F_5(n)$ and $F_{10}(n)$, where the number of clique separators grows with $n$ but all clique separators generate similar attacchedness graphs that have the same obstruction. We have to give particular attention only on $F_{12}(4k)$ because it has two clique separators that generate two different attacchedness graphs, moreover, we need to distinguish the case $k=2$ and the case $k>2$, as we reported in Table~\ref{table:syn}.

Finally we remark that the obstructions in our characterization are 2-edge colored subgraphs and that they have to be forbidden in each graph of the collection of the attachedness graph of $G^+$, while in L\'{e}v\^{e}que, Maffray,~and~Preissmann's characterization the obstructions are forbidden in the input graph itself.

\begin{figure}[h]
\centering
\begin{overpic}[width=14cm]{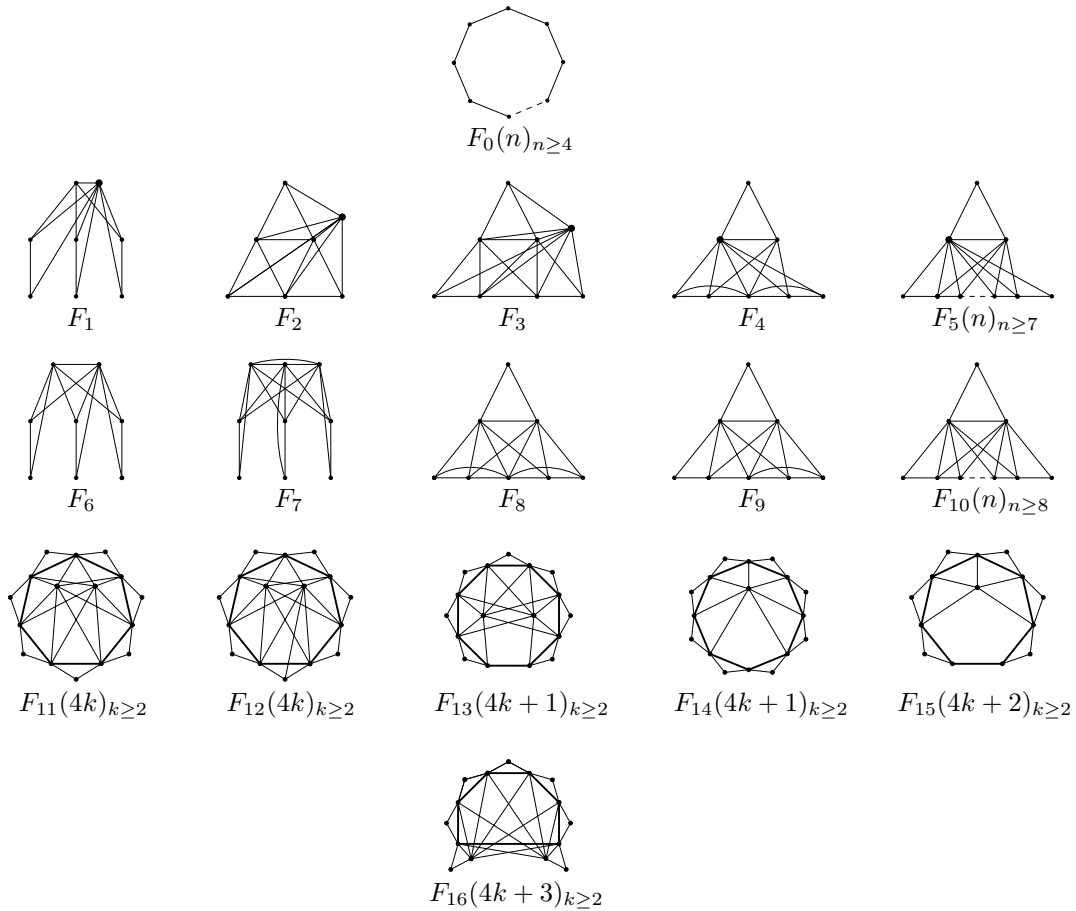}

\put(43.5,74.5){$F_0(n)_{n\geq4}$}

\put(6,57.5){$F_1$}
\put(25.5,57.5){$F_2$}
\put(46.5,57.5){$F_3$}
\put(69,57.5){$F_4$}
\put(87.2,57.5){$F_5(n)_{n\geq7}$}

\put(6,40.3){$F_6$}
\put(25.5,40.3){$F_7$}
\put(46.5,40.3){$F_8$}
\put(69,40.3){$F_9$}
\put(87.2,40.3){$F_{10}(n)_{n\geq8}$}

\put(1.5,21){$F_{11}(4k)_{k\geq2}$}
\put(21,21.){$F_{12}(4k)_{k\geq2}$}
\put(40.5,21.){$F_{13}(4k+1)_{k\geq2}$}
\put(63,21.){$F_{14}(4k+1)_{k\geq2}$}
\put(84,21.){$F_{15}(4k+2)_{k\geq2}$}

\put(40,3){$F_{16}(4k+3)_{k\geq2}$}
\end{overpic}
\caption{L\'{e}v\^{e}que, Maffray and Preissmann's exhaustive list of minimal non path graphs~\cite{bfm} (bold edges form a clique).}
\label{fig:lmp}
\end{figure}

\begin{table}
	\begin{center}
		{\tablinesep=0.01ex\tabcolsep=2pt
			\begin{tabular}{||c c||} 
				\hline
				Family & Obstruction  \\
				\hline\hline
				$F_1, F_2,\ldots, F_9, F_{10}$ & $W_3^{(0)}$ \\ 
				\hline
				$F_{11}(4k)_{k\geq2}$ & $W_{2k-1}^{(0)}$\\
				\hline
				$F_{12}(4k)_{k\geq2}$ & $W_3^{(0)}$, $W_{3}^{(1)}$,  (for $k=2$), $F_{2k-1}$, $W^{(1)}_{2k-1}$ (for $k>2$)\\
				\hline
				$F_{13}(4k+1)_{k\geq2}$ &  $DF_{2k-1}$\\
				\hline
				$F_{14}(4k+1)_{k\geq2}$ &  $\widetilde{F}_{2k+1}$\\
				\hline
				$F_{15}(4k+2)_{k\geq2}$ &  $F_{2k+1}$\\
				\hline
				$F_{16}(4k+3)_{k\geq2}$& $F_{2k+1}$\\
				\hline
				\hline
			\end{tabular}
		}
		\caption{A dictionary between L\'{e}v\^{e}que, Maffray and Preissmann's characterization and Statement~\ref{itfin:nof} in Theorem~\ref{cor:all}. Note that $F_0$ is the obstruction to chordality.}
		\label{table:syn}
	\end{center}
\end{table}

\section{Conclusions}\label{sec:conclusions}

We have presented two new characterizations of path graphs. At the state of the art, our first characterization is the unique that directly describes a polynomial algorithm. In the second one we give a list of local minimal forbidden subgraphs. This paper is the first part of a wider study about path graphs and directed path graphs. The algorithmic consequences are shown in~\cite{balzotti}, in which our first characterization is used to describe a recognition algorithm that specializes for path graphs and directed path graphs. 

We left as open problem the idea of extending our approach to rooted path graphs, for which a list of minimal forbidden subgraphs is unknown, even if some partial results were found~\cite{cameron-hoang_1,gutierrez-leveque,gutierrez-tondato}.

\section*{Acknowledgements}
We wish to thank Paolo G. Franciosa for deep discussion and his helpful advice.

%\bibliographystyle{siam}
%\bibliography{biblio.bib}

\end{document}